\documentclass[11pt]{amsart}
\usepackage[margin=1.1in]{geometry}
\usepackage{amsmath,amssymb,amsthm,mathtools}
\usepackage{booktabs,enumitem}
\usepackage[hidelinks]{hyperref}
\hypersetup{pdftitle={The rank of 3x3 matrix multiplication over F2 is 23},
  pdfauthor={Tejasvi Singh Tomar}}

\newtheorem{theorem}{Theorem}[section]
\newtheorem{lemma}[theorem]{Lemma}
\newtheorem{proposition}[theorem]{Proposition}
\newtheorem{corollary}[theorem]{Corollary}
\theoremstyle{definition}
\newtheorem{definition}[theorem]{Definition}
\theoremstyle{remark}

\numberwithin{equation}{section}
\newcommand{\F}{\mathbb{F}_2}
\newcommand{\Z}{\mathbb{Z}}
\newcommand{\GL}{\mathrm{GL}_3(\F)}
\newcommand{\mm}{\langle 3,3,3\rangle}
\newcommand{\rk}{\operatorname{rank}}
\newcommand{\lb}{\operatorname{lb}}
\newcommand{\cp}{\operatorname{cap}}

\newcommand{\cK}{\mathcal{K}}

\begin{document}

\title{The rank of $3\times 3$ matrix multiplication over $\F$ is $23$}
\author{Tejasvi Singh Tomar}
\renewcommand{\urladdrname}{{\itshape ORCID}}
\urladdr{\href{https://orcid.org/0000-0003-4668-3639}{https://orcid.org/0000-0003-4668-3639}}
\subjclass[2020]{68Q17, 15A69; 68V05, 68V15}
\keywords{Matrix multiplication, tensor rank, bilinear complexity, lower bounds, finite fields, substitution method, computer-assisted proof, formal verification, Lean}

\begin{abstract}
The rank of the tensor of $3\times 3$ matrix multiplication over the field with two elements is at most $23$ by Laderman's algorithm, and Rudich and Rousseau recently proved that it is at least $22$. We prove that it equals $23$. Hence Laderman's algorithm uses the fewest multiplications among all bilinear algorithms over $\F$ and among all bilinear algorithms with integer coefficients. The proof uses the substitution method in the form developed by D'Ambrosio, Wang and Yang et al., which bounds the number of first factors of a decomposition that lie in a given subspace of the space of first factors. We sharpen these bounds for $111$ of Wang's $496$ classes of subspaces, constrain the ranks of the first factors through a $27\times 27$ flattening of the tensor, and list, up to symmetry, all sets of $22$ first factors that satisfy these constraints. An exact completion search shows that none of them is the set of first factors of a decomposition. The computation emits certificates, which are checked in the Lean~4 proof assistant. In one formal proof the largest checks are evaluated as compiled code, and in a second one the kernel evaluates every check, so that its final theorem depends only on the three standard axioms of Lean. The Lean proofs build on the formalization of Rudich and Rousseau and import their table of bounds and their reduction for block algorithms.
\end{abstract}

\maketitle

\section{Introduction}\label{sec:intro}

Let $\mm$ be the tensor of $3\times 3$ matrix multiplication. Its rank $R_K(\mm)$ over a field $K$ is the smallest number of multiplications in a bilinear algorithm over $K$ that computes the product of two $3\times 3$ matrices \cite[Ch.~14]{BCS97}. Laderman's algorithm \cite{Lad76} uses $23$ multiplications. It does not use commutativity and its coefficients are $0$ and $\pm1$, so it works over any ring, and $R_K(\mm)\le 23$ for all fields $K$. For $2\times 2$ matrices the question was settled half a century ago: Strassen's algorithm uses $7$ multiplications \cite{Str69}, and Hopcroft and Kerr \cite{HK71} and Winograd \cite{Win71} proved that $7$ are necessary. For $3\times 3$ matrices, however, the best lower bound valid over all fields is $19$, due to Bläser \cite{Bla03}, and whether Laderman's $23$ multiplications are necessary has been open since 1976.

Over $\F$ the lower bound has risen several times in 2026. C.~Wang proved $R_{\F}(\mm)\ge 20$ with an automated method that splits the problem into $496$ classes of subspaces \cite{Wan26a}, and Beuchert checked his certificate independently \cite{Beu26}. Wang \cite{Wan26b} and, independently, Yang et al.\ \cite{Yan26} then proved $R_{\F}(\mm)\ge 21$, and Yang et al.\ formalized their proof in Lean. Tahir independently obtained $21$ conditionally on bounds from Wang's table \cite{Tah26}. Rudich and Rousseau proved $R_{\F}(\mm)\ge 22$, also in Lean \cite{RR26}. This left the question whether $22$ multiplications suffice over $\F$, and we show that they do not.

\begin{theorem}\label{thm:main}
$R_{\F}(\mm)=23$.
\end{theorem}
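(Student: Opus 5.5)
The upper bound $R_{\F}(\mm)\le 23$ is Laderman's algorithm, whose coefficients lie in $\{0,\pm1\}$ and reduce to a decomposition over $\F$. For the lower bound we use $R_{\F}(\mm)\ge 22$ from Rudich and Rousseau \cite{RR26}. It then suffices to rule out a decomposition $\mm=\sum_{i=1}^{22} a_i\otimes b_i\otimes c_i$ over $\F$. We work with the multiset $A=\{a_1,\dots,a_{22}\}$ of first factors, which are nonzero $3\times3$ matrices over $\F$. The symmetry group of $\mm$ includes $\GL\times\GL\times\GL$ acting by $(X,Y,Z)\mapsto(PXQ^{-1},QYR^{-1},RZP^{-1})$ together with transposition and cyclic permutation. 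We may therefore replace $A$ by any image under the induced action on first factors and enumerate only orbit representatives.

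The first step is the substitution-method constraint. For every subspace $U$ of the $9$-dimensional space of first factors, restricting the decomposition to the complement of $U$ shows that
\[
\#\{i : a_i\in U\}\le 22-\lb(U),
\]
where $\lb(U)$ is a lower bound for the rank of the restricted tensor. Following Wang's classification into $496$ classes \cite{Wan26a}, we take the capacities $\cp(U)=22-\lb(U)$ from the imported table of Rudich and Rousseau. For as many classes as possible we sharpen $\lb(U)$ by running the substitution recursion deeper, or by using the block-algorithm reduction. Each improvement is recorded as a certificate.

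The second step adds rank information from the $27\times27$ flattening. One flattening of $\mm$ has rank $9$, which is too weak. So we use a Koszul-type flattening $\cK$ built from $\mm$ by tensoring with a small auxiliary space, and obtain
\[
\sum_i \rk\cK(a_i)\ge \rk\cK(\mm).
\]
Here $\rk\cK(a_i)$ depends only on $\rk a_i\in\{1,2,3\}$. This gives a linear inequality on the numbers $n_1,n_2,n_3$ of first factors of each rank, with $n_1+n_2+n_3=22$. For example, it forces enough rank-$3$ factors that the capacity bounds on low-rank subspaces become binding. We then enumerate, up to symmetry, all multisets of $22$ nonzero matrices that satisfy every capacity constraint and the rank inequality. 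The enumeration adds elements in canonical order and prunes as soon as some subspace $U$ exceeds $\cp(U)$.

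The final step is exact completion. For each surviving $A$, we ask whether there exist $b_i,c_i$ with
\[
\sum_i a_i\otimes b_i\otimes c_i=\mm .
\]
Fixing the $a_i$ turns this into a system that is bilinear in the $(b_i,c_i)$. We solve it by branching on the $b_i$, using the slice equations and symmetry breaking so that the remaining conditions become linear in the $c_i$, and then doing Gaussian elimination over $\F$. Every infeasibility proof is emitted as a certificate and checked in Lean. The main obstacle is the size of the enumeration in the middle step. If the sharpened capacities and the flattening bound are not tight enough, the number of candidate sets $A$ explodes and the completion search becomes infeasible. I would therefore first tighten the bounds for the classes that the pruning hits most often, iterating until the candidate list is small.
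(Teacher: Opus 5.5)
Your outline has the right skeleton: capacities from Lemma~\ref{lem:cap}, a flattening that sees $\rk a_i$, orderly enumeration up to symmetry, and an exact completion search with Lean certificates. However, the one new constraint you actually specify does not work as stated. A Koszul flattening with respect to $A$ treats $a_i$ as a vector of the $9$-dimensional space $A$. The map $\omega\mapsto a_i\wedge\omega$ has rank $\binom{8}{p}$ for every nonzero $a_i$, so the contribution of a term cannot depend on $\rk a_i$.

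What does work is the split flattening of Section~\ref{sec:types}. Write $A=\F^3\otimes\F^3$ and flatten $T$ with rows $(i,\beta)$ and columns $(j,\gamma)$. Then $T$ becomes a $27\times 27$ permutation matrix, and each term becomes a Kronecker product of $a_t$ with $b_tc_t^{\mathsf T}$, of rank $\rk a_t$. Even this gives only $n_1+2n_2+3n_3\ge 27$, i.e.\ $n_2+2n_3\ge 5$. That forces no rank-$3$ factor at all, since $(17,5,0)$ satisfies it, contrary to your expectation. To get usable type restrictions the paper factors $P=XY^{\mathsf T}$ and proves that $\Gamma=Y^{\mathsf T}P^{-1}X$ is idempotent. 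It then uses the resulting pairwise identities \eqref{eq:excess} to exclude eight types (Lemmas~\ref{lem:S}, \ref{lem:S3} and Proposition~\ref{prop:types}). Nothing in your plan plays this role.

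The more serious gap is your fallback of tightening $\lb(U)$ until the candidate list is small. A capacity can never go below $22-R(U)$, and the paper shows via Kauers's length-$23$ schemes that $R(U)=\lb(U)$ already holds for $406$ of the $496$ classes. For instance, $L_0=\langle e_{11},e_{12}+e_{21}\rangle$ has $R(L_0)=20$ exactly, hence capacity $2$. Yet case (II) of the search uses that each of these $441$ lines contains at most one first factor (Theorem~\ref{thm:l479}). That is not a capacity statement. It is proved by enumerating all decompositions of $T_{L_0}$ with admissible coset profiles and showing, via the lifting criterion of Lemma~\ref{lem:lift}, that none extends to $T$. The kernel-only proof instead uses Lemma~\ref{lem:W}, whose bound $11$ beats the capacity $13$ that $R(W)=9$ gives, together with Theorem~\ref{thm:planes}. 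Even with all of these, about $7.5\cdot 10^{7}$ canonical admissible sets survive, and each must be refuted by the completion search. So the list never becomes small, and your plan has no mechanism for producing constraints that go beyond quotient ranks.

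You also do not identify the sharpening that turns the search into one over sets rather than multisets: $R(S)\ge 21$ for a rank-one point (Theorem~\ref{thm:point}, Corollary~\ref{cor:distinct}). Two minor points: the block-algorithm reduction of Rudich and Rousseau transfers bounds from integer algorithms to $\F$ and cannot improve any $\lb(U)$. Cyclic permutation of the factors does not act on first factors, so the group acting on them is only $G$, of order $56{,}448$. What you describe is a sound logical template, but its feasibility is exactly the content of the proof, and the ingredients that make it feasible are missing.
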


A bilinear algorithm with integer coefficients becomes a bilinear algorithm over $\F$ with the same number of multiplications when its coefficients are reduced modulo $2$. An algorithm with integer constants that can be applied to blocks of matrices cannot use commutativity, and by the argument of Hopcroft and Kerr \cite{HK71} a lower bound over $\F$ applies to it as well. Rudich and Rousseau explain this reduction and formalize it in Lean \cite{RR26}. Theorem~\ref{thm:main} therefore has the following consequence.

\begin{corollary}\label{cor:integer}
A bilinear algorithm over $\F$ or with integer coefficients that multiplies two $3\times 3$ matrices uses at least $23$ multiplications. In particular $R_{\Z}(\mm)=23$, and no recursive $3\times3$ algorithm with integer constants has an exponent below $\log_3 23\approx 2.854$.
\end{corollary}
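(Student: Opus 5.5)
The corollary follows from Theorem~\ref{thm:main} by reduction modulo $2$, so the plan is to make each reduction step explicit. First, the $\F$ part of the first sentence is just the inequality $R_{\F}(\mm)\ge 23$ in Theorem~\ref{thm:main}. Next, take a bilinear algorithm with integer coefficients using $r$ multiplications. It is a decomposition $\mm=\sum_{i=1}^r u_i\otimes v_i\otimes w_i$ with $u_i,v_i,w_i\in\Z^9$, and this is an identity of integer tensors. Applying the ring homomorphism $\Z\to\F$ coordinatewise preserves sums and tensor products. The structure tensor of $\mm$ has entries in $\{0,1\}$ and reduces to itself, so we obtain a decomposition of $\mm$ over $\F$ with at most $r$ terms. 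Hence $r\ge R_{\F}(\mm)=23$, which gives $R_{\Z}(\mm)\ge 23$. Laderman's algorithm has coefficients in $\{0,\pm1\}$ and uses no commutativity, so it gives $R_{\Z}(\mm)\le 23$, and therefore $R_{\Z}(\mm)=23$.

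For the recursive statement, a $3\times3$ algorithm with integer constants that can be applied to blocks must be noncommutative. Following Hopcroft and Kerr \cite{HK71}, as formalized by Rudich and Rousseau \cite{RR26}, one first reduces it to a bilinear noncommutative algorithm. A general straight-line program over a noncommutative ring can be brought to bilinear form without increasing the number of essential multiplications. One does this by taking homogeneous degree-$(1,1)$ components: each product in the program is replaced by the bilinear parts of its factors, and linear operations are free. The resulting algorithm is bilinear with integer coefficients, so by the previous paragraph it uses at least $23$ multiplications. Recursion on $3^k\times 3^k$ matrices with $m\ge 23$ multiplications then costs $T(n)=mT(n/3)+O(n^2)$, which gives exponent $\log_3 m\ge\log_3 23\approx 2.854$.

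The only delicate step is this reduction for block algorithms. It needs the observation that taking bilinear components preserves correctness over noncommutative rings, and that integer constants reduce compatibly. Since the paper imports this reduction from \cite{RR26}, I would cite it rather than reprove it.
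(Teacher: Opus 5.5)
Your proposal is correct and follows the paper's argument: reduce an integer bilinear algorithm modulo $2$, use Laderman's algorithm for the upper bound, and apply the Hopcroft--Kerr reduction for block algorithms, which the paper likewise imports from \cite{RR26}. One small correction to your sketch of that reduction: for a product $fg$ you should keep only the ordered $x$-then-$y$ component $f_xg_y$ (plus integer multiples of the corresponding components of earlier products, which come from constant terms), not the full degree-$(1,1)$ part; that part also contains $f_yg_x$ and would double the count, and discarding the $yx$-ordered monomials is exactly where noncommutativity is used.
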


Over fields of characteristic zero the rank of $\mm$ is still unknown. It lies between $19$ and $23$.

\subsection*{The method}
The proof is a computation organized by the substitution method \cite{Pan66,AFT11,LM17,Bla03,Bla13}. Let $A$ be the space of first factors. For a subspace $S\subseteq A$ let $R(S)$ be the rank of $\mm$ modulo $S$, that is, the rank of the problem in which the first input is restricted to the annihilator of $S$. In a decomposition of length $r$ at most $r-R(S)$ terms have their first factor in $S$ (Lemma~\ref{lem:cap}). For $r=22$ this bounds the number of first factors that each subspace can contain, the \emph{capacity} of the subspace. Wang bounded $R(S)$ for representatives of the $496$ orbits of subspaces under the symmetries of $\mm$ \cite{Wan26a}, and Rudich and Rousseau improved these bounds \cite{RR26}.

In Section~\ref{sec:raise} we raise the lower bound for $111$ classes of their table. For most of them we enumerate the possible first factors of a hypothetical shorter decomposition of the quotient and refute each candidate. The others follow from these by monotonicity. The most useful new bound is $R(S)\ge 21$ for a point $S$ spanned by a matrix of rank one. With the bounds of \cite{RR26} for the other two classes of points it shows that the $22$ first factors of a decomposition of length $22$ are distinct. By Lemma~\ref{lem:cap}, decompositions of length $23$ give upper bounds, and those of a collection of Kauers \cite{Kau} show that most of the new bounds are best possible.

Section~\ref{sec:types} constrains the numbers $n_1,n_2,n_3$ of first factors of rank $1$, $2$ and~$3$. These constraints come from a $27\times 27$ flattening of $\mm$ to which each term contributes the rank of its first factor, and hence $n_1+2n_2+3n_3\ge 27$. A Gram-matrix form of the identity behind this inequality excludes eight types $(n_1,n_2,n_3)$, three of them by short arguments and five by an exhaustive computation. Section~\ref{sec:l479} shows that, if at least $14$ first factors have rank one, no line in a certain orbit of $441$ lines contains two first factors, although the capacity of these lines is $2$. The proof assumes two first factors on such a line, enumerates the possible projections of the other $20$ terms and shows that none of them lifts back to a decomposition of $\mm$.

With these constraints we enumerate, up to the symmetry group of order $56{,}448$, all sets of $22$ nonzero matrices in $\F^{3\times 3}$ that satisfy them (Section~\ref{sec:search}). An exact search then tries to complete each set to a decomposition (Section~\ref{sec:tsolve}), and fails for every one of them.

\subsection*{Relation to earlier work}
Capacities, coset profiles, the symmetry-reduced enumeration of first factors and the deduction that the first factors are distinct come from the work of D'Ambrosio \cite{DAm26}, Wang \cite{Wan26a,Wan26b} and Yang et al.\ \cite{Yan26}, and the split flattening is due to Yang et al. In the proofs of $R_{\F}(\mm)\ge 21$ the conditions on the first factors alone exclude all cases. At $r=22$ they do not, and most of the work lies in refuting the sets of first factors that survive, by an exhaustive search for the second and third factors (Section~\ref{sec:tsolve}). Certificates that branch on these factors also occur in the Lean proof of \cite{RR26}. What is new here is this refutation step, applied to all $75{,}327{,}600$ sets that survive the search at $r=22$ (Table~\ref{tab:search}), the new lower bounds of Table~\ref{tab:raise}, the type exclusions of Proposition~\ref{prop:types} and Theorem~\ref{thm:l479}.

\subsection*{Verification}
Sections~\ref{sec:verify} and~\ref{sec:kernel} describe the verification. The searches emit certificates, which are verified by checkers written in Lean. The soundness of each checker is a theorem of Lean proved with its three standard axioms. There are two formal proofs. In the first, the checks for the search at $r=22$ and for the largest enumerations are evaluated as compiled code through \texttt{native\_decide}, which adds one axiom for each evaluation. This proof therefore depends on the Lean compiler and runtime as well as on the kernel. In the second (Section~\ref{sec:kernel}), the kernel evaluates every check, and its final theorem depends only on the axioms \texttt{propext}, \texttt{Classical.choice} and \texttt{Quot.sound}. Laderman's algorithm is also checked in Lean, which makes the equality $R_{\F}(\mm)=23$ a theorem there, and the bound of Corollary~\ref{cor:integer} for algorithms with integer constants that can be applied to blocks is derived in the formulation of \cite{RR26}. Statements marked with $\dagger$ depend on a computation.

\section{Preliminaries}\label{sec:prelim}

\subsection{The tensor}
Let $A=B=C=\F^{3\times 3}$ and let $e_{ij}$ be the matrix units. We write
\[
T=\mm=\sum_{i,j,k=1}^{3} e_{ij}\otimes e_{jk}\otimes e_{ki}\in A\otimes B\otimes C .
\]
A \emph{decomposition of length $r$} is an expression $T=\sum_{s=1}^{r}a_s\otimes b_s\otimes c_s$, and the rank of $T$ is the least such $r$. We call $a_s$ the \emph{first factors}. When $T$ has a decomposition of length $r$ it also has one of length $r+1$: replace a term $a\otimes b\otimes c$ by $a\otimes b\otimes c'+a\otimes b\otimes(c+c')$ with $c'\notin\{0,c\}$. In a decomposition of length $22$ all factors are nonzero, since otherwise $T$ would have a decomposition of length $21$, contrary to \cite{RR26}.

For a first factor $a$ we write $B$ and $C$ for the second and third factors read as matrices in the index order of $T$. Thus $b=\sum B_{jk}e_{jk}$ and $c=\sum C_{ki}e_{ki}$, and the triple $(a,b,c)$ is identified with $(a,B,C)$.

\subsection{Symmetries}
For $P,Q,R\in\GL$ the map
\begin{equation}\label{eq:sym}
(a,B,C)\mapsto (PaQ^{-1},\ QBR^{-1},\ RCP^{-1})
\end{equation}
fixes $T$, because $Q^{-1}$ and $Q$ cancel on the index $j$ shared by the first two factors, and $R$ and $P$ cancel in the same way on $k$ and $i$. So does $(a,B,C)\mapsto(a^{\mathsf T},C^{\mathsf T},B^{\mathsf T})$, which permutes the terms of $T$. Both maps send decompositions to decompositions of the same length. On first factors they generate the group $G$ of maps $a\mapsto PaQ$ and $a\mapsto Pa^{\mathsf T}Q$ with $P,Q\in\GL$. It has order $2\cdot 168^2=56{,}448$ and preserves the rank of a matrix. The space $A$ contains $511$ nonzero matrices, of which $49$ have rank one, $294$ rank two and $168$ rank three. The group $G$ acts transitively on each of these three sets.

\subsection{Quotients and capacities}\label{subsec:quot}
For a subspace $S\subseteq A$ let $\pi_S\colon A\to A/S$ be the projection and $T_S=(\pi_S\otimes 1\otimes 1)(T)$. We write $R(S)$ for the rank of $T_S$. If $S\subseteq S'$ then $R(S')\le R(S)$, and $R(gS)=R(S)$ for $g\in G$.

\begin{lemma}[{\cite[\S4]{DAm26}}, {\cite[Lemma~2.1]{Wan26b}}, {\cite[Lemma~2.2]{Yan26}}]\label{lem:cap}
If $T=\sum_{s=1}^{r}a_s\otimes b_s\otimes c_s$, then for every subspace $S\subseteq A$
\[
\#\{s : a_s\in S\}\le r-R(S).
\]
\end{lemma}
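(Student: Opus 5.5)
Apply the projection $\pi_S\otimes 1\otimes 1$ to the given decomposition. This yields a decomposition of $T_S$ whose length is the number of surviving terms, and comparing that length with $R(S)$ gives the bound.

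By linearity,
\[
T_S=(\pi_S\otimes 1\otimes 1)(T)=\sum_{s=1}^{r}\pi_S(a_s)\otimes b_s\otimes c_s .
\]
Since $\pi_S$ has kernel $S$, we have $\pi_S(a_s)=0$ exactly when $a_s\in S$, and each such term vanishes. Removing these terms leaves
\[
T_S=\sum_{s:\,a_s\notin S}\pi_S(a_s)\otimes b_s\otimes c_s .
\]
This expresses $T_S$ as a sum of $r-\#\{s:a_s\in S\}$ tensors of rank at most one in $(A/S)\otimes B\otimes C$. By the definition of $R(S)$ as the rank of $T_S$,
\[
R(S)\le r-\#\{s:a_s\in S\},
\]
which rearranges to the claimed inequality.

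The only point needing care is that $R(S)$ is the rank of $T_S$ as a tensor in $(A/S)\otimes B\otimes C$. In the introduction this is phrased as the rank of the restricted problem on the annihilator of $S$. The two descriptions agree, because $(A/S)^*$ is canonically identified with $S^\perp\subseteq A^*$, so a decomposition of $T_S$ is the same as a bilinear algorithm for $T$ with its first input restricted to $S^\perp$. With this identification fixed the argument is a direct count and has no real obstacle. It holds over any field, in particular over $\F$, and for any $r$, including the case $r<R(S)$, where it shows that no decomposition of that length exists.
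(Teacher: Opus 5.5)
Your proof is correct and is the same as the paper's: apply $\pi_S\otimes 1\otimes 1$, note that exactly the terms with $a_s\in S$ vanish, and read off a decomposition of $T_S$ of length $\#\{s:a_s\notin S\}$. Your remark identifying $(A/S)^*$ with $S^\perp$ is consistent with how the paper defines $R(S)$, though the proof does not need it.
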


\begin{proof}
Applying $\pi_S\otimes 1\otimes 1$ removes the terms with $a_s\in S$ and leaves a decomposition of $T_S$ of length $\#\{s:a_s\notin S\}$.
\end{proof}

We call a subspace of dimension $1$ a \emph{point} and one of dimension $2$ a \emph{line}. Wang sorted the subspaces of $A$ into $496$ classes up to the action of $G$ \cite{Wan26a}. A \emph{table} assigns to each class a number $\lb(S)\le R(S)$. At $r=22$ the \emph{capacity} of $S$ is $\cp(S)=22-\lb(S)$, and Lemma~\ref{lem:cap} says that a subspace contains at most $\cp(S)$ first factors of a decomposition of length $22$. We start from the table of Rudich and Rousseau, which gives $R(S)$ exactly for $359$ classes and lower bounds for the others \cite{RR26}. In this table $R(\{0\})\ge 22$, and $R(S)\ge 21$ for the points spanned by matrices of rank two and three. In our data the classes are numbered as in the Lean table of \cite{RR26}, which lists a basis of a representative of each class.

The case $S=\ker\varphi$ of a hyperplane will be used often. Here $\varphi\colon A\to\F$ is a nonzero linear form, which we write as $\varphi(a)=\sum_{i,j}x_{ij}a_{ij}$ with a matrix $x\ne 0$, and we set $\rk\varphi=\rk x$. Contracting $T$ with $\varphi$ gives the $9\times 9$ matrix $T(\varphi)=\sum_{i,j,k}x_{ij}\, e_{jk}\otimes e_{ki}$. It consists of three copies of $x^{\mathsf T}$, one for each value of $k$, and so has rank $3\rk\varphi$. Hence $R(\ker\varphi)=3\rk\varphi$, and Lemma~\ref{lem:cap} becomes the statement that at least $3\rk\varphi$ first factors satisfy $\varphi(a_s)=1$.

\section{Lower bounds for quotients}\label{sec:raise}

\subsection{Coset profiles}\label{subsec:profiles}
Let $S$ be a subspace and $\ell$ an integer. To prove $R(S)\ge\ell$ we assume that $T_S$ has a decomposition of length $\ell-1$, which we may take with all factors nonzero (Section~\ref{sec:prelim}). Its first factors are nonzero cosets $\bar a_1,\dots,\bar a_{\ell-1}$ of $S$. Let $n(x)$ be the number of $t$ with $\bar a_t=x$. Applying Lemma~\ref{lem:cap} to the decomposition of $T_S$ and a superspace $S'\supseteq S$ gives (compare \cite[Lemma~2.2]{Yan26})
\begin{equation}\label{eq:coset}
\sum_{0\ne x\in S'/S} n(x)\le \ell-1-R(S')\le \ell-1-\lb(S').
\end{equation}
We call a function $n$ with $\sum_x n(x)=\ell-1$ that satisfies \eqref{eq:coset} for all $S'$ a \emph{coset profile}. The coset profiles are the integer points of a polytope with one coordinate per nonzero coset. We enumerate them up to the stabilizer of $S$ in $G$, or for six classes up to a subgroup of it, by an orderly branch and bound in which linear-programming bounds prune the branches. We then refute each coset profile by the completion search of Section~\ref{sec:tsolve} with target $T_S$. If all coset profiles are refuted, then $R(S)\ge\ell$. The bounds $\lb(S')$ used in \eqref{eq:coset} belong to strict superspaces of $S$, so we establish new bounds in decreasing order of $\dim S$.

\subsection{Points}
\begin{theorem}[$\dagger$]\label{thm:point}
If $S$ is spanned by a matrix of rank one, then $R(S)\ge 21$.
\end{theorem}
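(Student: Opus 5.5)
We follow the scheme of Section~\ref{subsec:profiles} with $S=\langle e_{11}\rangle$ and $\ell=21$. Since $G$ acts transitively on the $49$ matrices of rank one, it suffices to treat this single point. We assume that $T_S$ has a decomposition of length $20$ with all factors nonzero. Its first factors are $20$ nonzero cosets of $S$ in $A/S\cong\F^8$, which has $255$ nonzero cosets. Each coset $x=\{a,a+e_{11}\}$ consists of two matrices. We then bound the multiplicity function $n$ through \eqref{eq:coset}, using the bounds $\lb(S')$ of the table of \cite{RR26} for all strict superspaces $S'\supseteq S$, after first raising those superspace bounds that we improve in decreasing order of dimension.

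The cheapest constraints come from hyperplanes. Let $S'=\ker\varphi$ with $\varphi(e_{11})=0$, so that $x_{11}=0$. Then $R(S')=3\rk\varphi$, and at least $3\rk\varphi$ of the $20$ cosets have $\varphi=1$. For $\rk\varphi=3$ this gives at least $9$ cosets off each such hyperplane. The constraints from the $2$-dimensional superspaces $S'=S+\langle a\rangle$ bound each single multiplicity by $n(x)\le 19-\lb(S')$. By \cite{RR26} we have $\lb\ge 20$ or so for the relevant lines, so most multiplicities are at most $1$ and the cosets are nearly distinct. Higher-dimensional $S'$, such as the spaces of matrices supported in a row or column through position $(1,1)$, give further packing constraints. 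We would enumerate the integer points of this polytope up to the stabilizer of $S$ in $G$. This stabilizer consists of the maps $a\mapsto PaQ$ with $Pe_{11}Q=e_{11}$, together with transposition, and has order $2\cdot 24^2\cdot$(small). The enumeration is an orderly branch and bound, with LP relaxations pruning branches whose remaining budget cannot meet the hyperplane and line constraints.

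Each surviving coset profile is then passed to the completion search of Section~\ref{sec:tsolve} with target $T_S$. For each coset we choose one of its two lifts $a$ (the choice does not matter modulo $S$). We then search for second and third factors $B_t,C_t$ such that $\sum_t \bar a_t\otimes b_t\otimes c_t=T_S$. This equation is a bilinear system over $\F$, and we solve it by branching on the $b_t$. For fixed $b_t$ the system is linear in the $c_t$, so each branch can be settled by Gaussian elimination. Strong pruning comes from partial flattenings: after fixing some $b_t$, the remaining terms must still reach the required ranks of the contractions of $T_S$.

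The main obstacle is the size of the search. The polytope of coset profiles with $255$ coordinates and total $20$ may have a very large number of orbits, and each completion search is exponential in the number of $b_t$ still free. We would first try to sharpen the profile constraints using more superspace bounds, for instance by proving improved bounds for the $3$- and $4$-dimensional superspaces of $S$ beforehand. We would also order the branching of the completion search by the flattening $T_S(\varphi)$ for a rank-$3$ form $\varphi$, whose $9$ forced terms with $\varphi(a_t)=1$ pin down many factors early. The search must emit certificates for every refuted profile, which are then checked as described in Section~\ref{sec:verify}.
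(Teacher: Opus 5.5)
Your plan is the paper's proof. You reduce to $S=\langle e_{11}\rangle$ and enumerate the coset profiles of size $20$ allowed by \eqref{eq:coset}, with the raised bounds of the strict superspaces already established. The enumeration runs up to the stabilizer of $S$, whose order is exactly $2\cdot 24^2=1152$, since $Pe_{11}Q=e_{11}$ forces $Pe_1=e_1$ and $Q^{\mathsf T}e_1=e_1$. Every profile is then refuted by an exhaustive completion search with target $T_S$.

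You branch on the $b_t$ and solve linearly for the $c_t$. The paper instead assigns pairs $(b_t,c_t)$ one term at a time with residual-rank pruning, but both searches are exhaustive. The paper's enumeration has $809{,}336{,}049$ nodes and $7{,}409{,}768$ leaves, all refuted, so the size you worry about turns out to be manageable.

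One derived constraint is wrong, though. For a line $S'=S+\langle a\rangle$, \eqref{eq:coset} with $\ell-1=20$ gives $n(\bar a)\le 20-\lb(S')$, not $19-\lb(S')$. Hence a line through $e_{11}$ with bound $20$ forbids the coset $\bar a$ altogether; an example is $L_0=\langle e_{11},e_{12}+e_{21}\rangle$, whose bound is $20$ by Table~\ref{tab:raise}. A bound of $19$ allows multiplicity one. Your remark that $\lb\ge 20$ leaves multiplicities at most $1$ matches neither your version nor the correct one.

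In an exhaustive computation this is not cosmetic. Pruning with $19-\lb(S')$ would discard profiles that a decomposition of $T_S$ could have, namely those using a coset $\bar a$ whose line $S+\langle a\rangle$ has bound $19$. The search would then prove nothing. Use \eqref{eq:coset} exactly as you state it at the start.
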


Here $A/S$ has $255$ nonzero cosets and the stabilizer of $S$ in $G$ has order $1152$. The enumeration of coset profiles of size $20$ has $809{,}336{,}049$ nodes and $7{,}409{,}768$ leaves, and all leaves are refuted.

\begin{corollary}[$\dagger$]\label{cor:distinct}
The $22$ first factors of a decomposition of length $22$ are pairwise distinct.
\end{corollary}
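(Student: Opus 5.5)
We argue by contradiction, using Lemma~\ref{lem:cap} with $S$ a point. Suppose $T=\sum_{s=1}^{22}a_s\otimes b_s\otimes c_s$ and $a_s=a_t=a$ for some $s\ne t$. By Section~\ref{sec:prelim} every factor of a decomposition of length $22$ is nonzero, so $a\ne 0$. Let $S=\langle a\rangle$ be the point spanned by $a$. Then at least two indices satisfy $a_s\in S$, and Lemma~\ref{lem:cap} gives
\[
2\le \#\{s:a_s\in S\}\le 22-R(S).
\]
Thus $R(S)\le 20$, and it remains to show that $R(S)\ge 21$ for every point.

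Since $G$ preserves matrix rank, acts transitively on the nonzero matrices of each rank, and satisfies $R(gS)=R(S)$, it is enough to treat the three classes of points, one for each value $\rk a\in\{1,2,3\}$. If $\rk a=1$, Theorem~\ref{thm:point} gives $R(S)\ge 21$. If $\rk a\in\{2,3\}$, the table of Rudich and Rousseau \cite{RR26} gives $R(S)\ge 21$ directly, as recorded in Section~\ref{subsec:quot}. In every case $\cp(S)=22-\lb(S)\le 1$, which contradicts the two first factors lying in $S$. Hence the $22$ first factors are pairwise distinct.

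The proof contains no real obstacle of its own. All the difficulty lies in the computational Theorem~\ref{thm:point}, which is why the corollary carries the $\dagger$ mark. The only points that need care are that the factors are nonzero, so the point $\langle a\rangle$ is defined, and that the three rank classes exhaust all points, which follows from the transitivity of $G$ stated in the preliminaries.
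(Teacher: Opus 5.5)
Your proposal is correct and follows essentially the paper's own argument: a repeated first factor places two first factors in the point it spans, and Theorem~\ref{thm:point} together with the bounds of \cite{RR26} give $R(S)\ge 21$ for every point, so Lemma~\ref{lem:cap} allows at most one. Your additional remarks, that the factors are nonzero and that the three rank classes exhaust the points, are correct details the paper leaves implicit.
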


\begin{proof}
A repeated first factor $a$ would put two first factors in the point $\F a$. By Theorem~\ref{thm:point} and \cite{RR26} all points have $R\ge 21$ and therefore capacity at most $1$.
\end{proof}

\subsection{The other classes}
Table~\ref{tab:raise} lists the new bounds by dimension. We proved $88$ of them, among them Theorem~\ref{thm:point}, by the method of Section~\ref{subsec:profiles}. The other $23$ follow by monotonicity: for each of them some $gS$ with $g\in G$ is contained in a subspace $S'$ of larger dimension whose new bound is at least as large, and $R(S)=R(gS)\ge R(S')$. These are the bound $19\to 20$ for one class of dimension $2$, the bound $18\to 19$ for eight classes of dimension $3$, the bound $16\to 17$ for three classes of dimension $4$, and the bound $17\to 18$ for eleven classes of dimension $4$. For $106$ of the $111$ classes the new bound equals the upper bound of \cite{RR26}.

\begin{proposition}[$\dagger$]\label{prop:raise}
The lower bounds of Table~\ref{tab:raise} hold.
\end{proposition}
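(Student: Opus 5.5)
The proof is computational and proceeds class by class through Table~\ref{tab:raise}, in decreasing order of $\dim S$. The order matters because the constraints \eqref{eq:coset} for a class $S$ use the bounds $\lb(S')$ of strict superspaces $S'\supsetneq S$. Those bounds must already be in their final form, whether taken from \cite{RR26} or raised earlier in the same pass. We therefore keep a working table, initialized from \cite{RR26}, and update it as each new bound is proved. We also recompute the coset profiles of lower-dimensional classes only after all their superspaces have been updated.

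For each of the $88$ classes handled directly, with representative $S$ and target $\ell$, we argue by contradiction. Assume $T_S$ has a decomposition of length $\ell-1$ with all factors nonzero. This is allowed because a shorter decomposition can be padded as in Section~\ref{sec:prelim}, and a zero factor would give rank at most $\ell-2$, which is excluded by the same argument applied one level down.
\begin{enumerate}[label=(\roman*)]
\item Enumerate all coset profiles $n$ on the nonzero cosets of $S$ with $\sum n(x)=\ell-1$ satisfying \eqref{eq:coset} for every superspace $S'$. The superspaces are obtained by identifying each subspace of $A$ containing $S$ with its class and reading $\lb$ from the working table.
\item Reduce the enumeration modulo the stabilizer of $S$ in $G$, or a subgroup of it in the six large cases, by an orderly canonical-augmentation search. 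Branches are pruned when the LP relaxation of \eqref{eq:coset} is infeasible.
\item For each surviving profile, run the exact completion search of Section~\ref{sec:tsolve} with target $T_S$. The first factors are fixed as coset representatives with the prescribed multiplicities. The search solves for the second and third factors over $\F$ by branching, and the linear constraints of $T_S$ propagate at each node. The search must return ``no completion'' for every profile.
\end{enumerate}
Each run emits a certificate. The certificate lists the profiles with a proof of exhaustiveness, namely the canonicity witnesses and the LP dual certificates for pruned branches, together with a refutation tree for each profile. Checking these certificates in Lean establishes $R(S)\ge\ell$.

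The remaining $23$ classes are handled by monotonicity. For each one we exhibit $g\in G$ and a class $S'$ of larger dimension with $gS\subseteq S'$ and new bound $\lb(S')\ge\ell$. Then $R(S)=R(gS)\ge R(S')$ by the monotonicity and invariance noted in Section~\ref{subsec:quot}. These containments are finite linear-algebra checks over $\F$: we verify that $g$ maps a basis of $S$ into the span of the listed basis of $S'$.

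The main obstacle is step~(iii) for the largest classes, above all the points of Theorem~\ref{thm:point}. There, millions of profiles each require a completion search over $255$ cosets. First we would make the completion search strong: order the branching by the most constrained coset, use the hyperplane bounds $R(\ker\varphi)=3\rk\varphi$ inside the quotient as early cuts, and break the residual symmetry of the stabilizer on the second and third factors. If that fails to terminate, we would split the profiles further by the rank types of representatives, so that each subproblem stays small enough for its certificate to be checked.
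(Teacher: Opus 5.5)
Your proposal is correct and follows the paper's route. The $88$ directly proved bounds come from enumerating the coset profiles that satisfy \eqref{eq:coset}, up to the stabilizer of $S$ (or a subgroup of it), with LP pruning and in decreasing order of $\dim S$, and refuting each profile by the completion search with target $T_S$; the other $23$ follow from $R(S)=R(gS)\ge R(S')$ for a larger $S'$. One small point: the cleanest way to reduce to a decomposition of length exactly $\ell-1$ with nonzero factors is to take one of minimal length, whose factors are automatically nonzero, and pad it by the splitting trick of Section~\ref{sec:prelim}, so no separate argument ``one level down'' is needed.
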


\begin{table}[ht]
\caption{New lower bounds on $R(S)$, by $\dim S$. A class is listed under the bound of \cite{RR26} and the new bound. The number in parentheses in the second column counts the classes whose new bound follows by monotonicity. The last two columns give the size of the enumerations of Section~\ref{subsec:profiles}.}\label{tab:raise}
\centering
\small
\begin{tabular}{@{}ccp{0.36\textwidth}rr@{}}
\toprule
$\dim S$ & classes & bounds & enumeration nodes & leaves\\
\midrule
$1$ & $1$ & $20\to 21$ & $809{,}336{,}049$ & $7{,}409{,}768$\\
$2$ & $4$ ($1$) & $19\to 20$ (1), $20\to 21$ (3) & $26{,}301{,}783{,}827$ & $5{,}211{,}895{,}431$\\
$3$ & $35$ ($8$) & $18\to 19$ (17), $19\to 20$ (18) & $3{,}654{,}254{,}117$ & $969{,}245{,}867$\\
$4$ & $50$ ($14$) & $16\to 17$ (8), $17\to 18$ (23), $18\to 19$ (19) & $46{,}041{,}278$ & $15{,}998{,}835$\\
$5$ & $21$ & $16\to 17$ (5), $16\to 18$ (1), $17\to 18$ (15) & $1{,}089{,}004$ & $349{,}120$\\
\bottomrule
\end{tabular}
\end{table}

\subsection{Upper bounds}\label{subsec:upper}
Lemma~\ref{lem:cap} also gives upper bounds: if a decomposition of length $23$ has $k$ first factors in $S$, then $R(S)\le 23-k$. We apply this to the collection of schemes of length $23$ that Kauers made available \cite{Kau}. It consists of the $64{,}150$ schemes in the data set of \cite[\S6]{KM22}, which extends that of \cite{HKS21}, and nine named schemes. Of its $64{,}159$ files, $62{,}239$ are decompositions over $\F$,\footnote{Each of the other $1{,}920$ files becomes a decomposition when one factor of one product is changed. These changes are listed in our data.} and we use each of them three times, once with each of its three factors in the role of the first factor. For $406$ of the $496$ classes, among them $103$ of the $111$ classes of Table~\ref{tab:raise}, one of these decompositions has $23-\lb(S)$ first factors in a subspace of the class, and hence $R(S)=\lb(S)$. For classes $293$ and $488$ in the numbering of \cite{RR26}, which keep the bounds of \cite{RR26}, the upper bounds obtained in this way are new, and they give $R(S)=17$ and $R(S)=20$, respectively. For class $473$ of Table~\ref{tab:raise} the new upper bound is $20$, which equals its new lower bound. Hence $R(S)$ is now known exactly for $107$ of the $111$ classes of Table~\ref{tab:raise}, namely class $473$ and the $106$ whose new bound equals the upper bound of \cite{RR26}. For class $468$ the collection gives $R(S)\le 20$, against the lower bound $19$. For example, one decomposition in the collection has the three matrices $I$, $e_{12}+e_{13}+e_{31}$ and their sum among its first factors. They are the three nonzero elements of a line of class $488$ and lie in planes of classes $468$ and $473$. By Lemma~\ref{lem:cap} this gives the bound $20$ for these three classes. For every class, our data list a decomposition with the largest number of first factors in a subspace of the class.

\section{The ranks of the first factors}\label{sec:types}

In this section we consider a decomposition $T=\sum_{t=1}^{r}a_t\otimes b_t\otimes c_t$ with all factors nonzero. Its \emph{type} is $(n_1,n_2,n_3)$, where $n_q$ is the number of first factors of rank $q$. We write $r_t=\rk(a_t)$ and $\Delta=\sum_t r_t-27$.

\subsection{The split flattening}
Index the rows of a $27\times 27$ matrix by pairs $(i,\beta)$ and the columns by pairs $(j,\gamma)$, where $i,j\in\{1,2,3\}$ and $\beta,\gamma$ run over the nine matrix positions. For $a\in A$, $b\in B$, $c\in C$ let $M(a,b,c)$ be the matrix with entries $a_{ij}b_\beta c_\gamma$. Its rank is $\rk(a)$ when $b,c\ne 0$, since $M(a,b,c)$ is the Kronecker product of $a$ and $bc^{\mathsf T}$ up to the order of indices. Let $P=\sum_t M(a_t,b_t,c_t)$. By linearity $P$ depends only on $T$, and its only nonzero entries are $P_{(i,jk),(j,ki)}=1$. Thus $P$ is a permutation matrix.

\begin{lemma}[{\cite[(4.2)]{Yan26}}]\label{lem:split}
$\sum_t\rk(a_t)\ge 27$, that is, $\Delta\ge 0$.
\end{lemma}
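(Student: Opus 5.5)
The plan is to compare ranks on the two sides of the identity $P=\sum_t M(a_t,b_t,c_t)$, which the text just before the statement establishes.

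First, $P$ is a $27\times 27$ permutation matrix, so $\rk P=27$. I would check this from the description of its nonzero entries. The row index $(i,\beta)$ with $\beta=(j,k)$ determines the column index $(j,\gamma)$ with $\gamma=(k,i)$. Conversely, the column $(j,(k,i))$ determines the row $(i,(j,k))$. So each row and each column of $P$ contains exactly one entry equal to $1$. Over any field, in particular $\F$, $P$ is invertible and has rank $27$.

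Second, each summand has rank $\rk(a_t)$. The matrix $M(a_t,b_t,c_t)$ has entries $(a_t)_{ij}(b_t)_\beta(c_t)_\gamma$. After reordering rows and columns it is the Kronecker product $a_t\otimes (b_tc_t^{\mathsf T})$. The factor $b_tc_t^{\mathsf T}$ is a $9\times 9$ matrix of rank one because $b_t,c_t\neq 0$. The rank of a Kronecker product is the product of the ranks, so $\rk M(a_t,b_t,c_t)=\rk(a_t)\cdot 1=\rk(a_t)$. The terms of the decomposition in this section are assumed to have all factors nonzero, so this applies to every $t$.

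Third, I would use subadditivity of rank:
\[
27=\rk P\le \sum_t \rk M(a_t,b_t,c_t)=\sum_t \rk(a_t),
\]
which gives $\Delta\ge 0$. If one also wanted the case of a decomposition with some zero factor, that term contributes the zero matrix to $P$. Dropping it leaves the sum unchanged and only decreases $\sum_t\rk(a_t)$, so the inequality still holds.

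None of these steps is a real obstacle. The only point that needs care is the bookkeeping confirming that $P$ is a permutation matrix, namely that $(i,jk)\mapsto(j,ki)$ is a bijection between the row and column index sets. The inverse map $(j,ki)\mapsto(i,jk)$ makes this immediate.
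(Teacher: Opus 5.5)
Your proposal is correct and is the paper's proof: $27=\rk P\le\sum_t\rk M(a_t,b_t,c_t)=\sum_t\rk(a_t)$ by subadditivity of rank. The permutation-matrix and Kronecker-rank checks you spell out are the facts the paper establishes in the paragraph just before the lemma.
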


\begin{proof}
$27=\rk P\le\sum_t\rk M(a_t,b_t,c_t)=\sum_t\rk(a_t)$.
\end{proof}

For a decomposition of length $22$ we have $\Delta=n_2+2n_3-5$, so Lemma~\ref{lem:split} gives $n_2+2n_3\ge 5$.

\subsection{The Gram form}
The inequality of Lemma~\ref{lem:split} does not use how the terms fit together. The following factorization of $P$ records this information as equations between pairs of terms. Factor each first factor as $a_t=L_tR_t^{\mathsf T}$ with $L_t,R_t\in\F^{3\times r_t}$. Let $X$ be the $27\times(27+\Delta)$ matrix whose column blocks are $L_t\otimes b_t$, with rows indexed like the rows of $P$, and let $Y$ be the $27\times(27+\Delta)$ matrix with column blocks $R_t\otimes c_t$, with rows indexed like the columns of $P$. Then $M(a_t,b_t,c_t)=(L_t\otimes b_t)(R_t\otimes c_t)^{\mathsf T}$, so $P=XY^{\mathsf T}$.

\begin{lemma}\label{lem:gram}
The matrix $\Gamma=Y^{\mathsf T}P^{-1}X$ is idempotent of rank $27$, and $\Pi=I+\Gamma$ is idempotent of rank $\Delta$. Its blocks are
\begin{equation}\label{eq:gram}
\Gamma_{ts}=R_t^{\mathsf T}B_sC_tL_s .
\end{equation}
\end{lemma}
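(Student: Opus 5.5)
The plan is to use only the factorization $P=XY^{\mathsf T}$ and the fact that $P$ is an invertible $27\times 27$ matrix. Over $\F$ we have $P^{-1}=P^{\mathsf T}$ because $P$ is a permutation matrix, but invertibility is all that is needed.

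For idempotency, compute
\[
\Gamma^2=Y^{\mathsf T}P^{-1}(XY^{\mathsf T})P^{-1}X=Y^{\mathsf T}P^{-1}PP^{-1}X=\Gamma .
\]
For the rank, note that $\Gamma$ is a $(27+\Delta)\times(27+\Delta)$ matrix and $\rk\Gamma\le 27$, since it factors through $\F^{27}$. For the reverse inequality, $X\Gamma Y^{\mathsf T}=(XY^{\mathsf T})P^{-1}(XY^{\mathsf T})=P$, which has rank $27$, so $\rk\Gamma\ge 27$. Over $\F$ we have $-1=1$, so $\Pi=I+\Gamma=I-\Gamma$, which is the complementary idempotent: $\Pi^2=I-2\Gamma+\Gamma^2=I-\Gamma$. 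For an idempotent the rank equals the dimension of its image, and $\F^{27+\Delta}=\operatorname{im}\Gamma\oplus\operatorname{im}(I-\Gamma)$. Hence $\rk\Pi=(27+\Delta)-27=\Delta$. (Alternatively, use $\rk=\operatorname{tr}$ for idempotents in characteristic zero; over $\F$ that only gives the rank mod 2, so the direct-sum argument is the one to use.)

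The block formula \eqref{eq:gram} is the only real computation, and it is an index bookkeeping check. I would first write $P^{-1}$ explicitly. Since $P_{(i,jk),(j,ki)}=1$, the inverse $P^{-1}=P^{\mathsf T}$ has entries $P^{-1}_{(j,\gamma),(i,\beta)}=1$ exactly when $\gamma=(k,i)$ and $\beta=(j,k)$ for some $k$. The block is
\[
\Gamma_{ts}=(R_t\otimes c_t)^{\mathsf T}P^{-1}(L_s\otimes b_s),
\]
and its $(p,q)$ entry is
\[
\sum (R_t)_{jp}\,(c_t)_{ki}\,(L_s)_{iq}\,(b_s)_{jk},
\]
summed over $i,j,k$. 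With the conventions $b=\sum B_{jk}e_{jk}$ and $c=\sum C_{ki}e_{ki}$ this becomes $\sum_{i,j,k}(R_t^{\mathsf T})_{pj}(B_s)_{jk}(C_t)_{ki}(L_s)_{iq}$, which is $(R_t^{\mathsf T}B_sC_tL_s)_{pq}$.

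The main obstacle is keeping the index orders consistent: which of $L$ and $R$ carries the row index $i$ versus $j$ under the Kronecker identification $M(a,b,c)=(L\otimes b)(R\otimes c)^{\mathsf T}$, and the transpose in $P^{-1}$. I would fix the conventions by checking the identity $M=(L\otimes b)(R\otimes c)^{\mathsf T}$ entrywise first, namely $a_{ij}b_\beta c_\gamma=\sum_p (L)_{ip}(R)_{jp}b_\beta c_\gamma$, and then read off the block formula.
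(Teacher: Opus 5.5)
Your proposal is correct and is essentially the paper's proof. It uses the same computation of $\Gamma^2$, the same sandwich $X\Gamma Y^{\mathsf T}=P$ to get $\rk\Gamma=27$, the complementary idempotent $I+\Gamma=I-\Gamma$ for $\Pi$, and the same entrywise evaluation of $Y^{\mathsf T}P^{\mathsf T}X$ using $P^{\mathsf T}_{(j,\gamma),(i,\beta)}=1$ exactly when $\beta=jk$ and $\gamma=ki$; your remark that the trace gives the rank only modulo $2$ over $\F$, so the direct-sum argument is needed, is a correct detail the paper leaves implicit.
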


\begin{proof}
$\Gamma^2=Y^{\mathsf T}P^{-1}(XY^{\mathsf T})P^{-1}X=\Gamma$. The rank of $\Gamma$ is at most $27$, and $X\Gamma Y^{\mathsf T}=P$ has rank $27$, so $\rk\Gamma=27$. Over $\F$ the matrix $I+\Gamma=I-\Gamma$ is the complementary idempotent, of rank $(27+\Delta)-27$. For \eqref{eq:gram}, $P^{-1}=P^{\mathsf T}$ and $P^{\mathsf T}_{(j,\gamma),(i,\beta)}=1$ exactly when $\beta=jk$ and $\gamma=ki$ for some $k$. Hence the $(\alpha,\alpha')$ entry of $\Gamma_{ts}$ is $\sum_{i,j,k}(R_t)_{j\alpha}(B_s)_{jk}(C_t)_{ki}(L_s)_{i\alpha'}$.
\end{proof}

Write $\Pi=UV^{\mathsf T}$ with $U,V$ of size $(27+\Delta)\times\Delta$ and let $U_t,V_t$ be their row blocks. Multiplying \eqref{eq:gram} by $L_t$ on the left and by $R_s^{\mathsf T}$ on the right, and setting $l_t=L_tU_t$ and $m_s=R_sV_s$, we obtain
\begin{equation}\label{eq:excess}
a_tB_sC_ta_s=\delta_{ts}\,a_t+l_tm_s^{\mathsf T}\qquad\text{for all }t,s,
\end{equation}
with $l_t,m_t\in\F^{3\times\Delta}$ whose columns lie in the column spaces of $a_t$ and $a_t^{\mathsf T}$. For $\Delta=0$ this is the rank-additive saturation lemma of Yang et al.\ \cite[Lemma~5.1]{Yan26}. Its solutions are invariant under the symmetries \eqref{eq:sym}, and under the change $(l_t,m_t)\mapsto(l_tg,m_tg^{-\mathsf T})$ with $g\in\mathrm{GL}_\Delta(\F)$.

\subsection{Excess zero}

\begin{lemma}\label{lem:S}
If $\Delta=0$ and some first factor is invertible, then $r=25$. In particular, a decomposition of length $22$ with $\Delta=0$ has type $(17,5,0)$.
\end{lemma}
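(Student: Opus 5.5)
Throughout we write $\Gamma_{ts}$ for the blocks of Lemma~\ref{lem:gram}. The plan is to use $\Delta=0$ to force $\Gamma=I$, normalize the invertible term to $I\otimes I\otimes I$ by a symmetry, and then read off from the off-diagonal blocks of $\Gamma$ that every other first factor has rank one.

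\emph{Step 1: $\Gamma=I$.} With $\Delta=0$ the matrix $\Gamma$ of Lemma~\ref{lem:gram} is $27\times 27$, idempotent and of rank $27$. Hence $\Gamma=I$. By \eqref{eq:gram} this means $R_t^{\mathsf T}B_sC_tL_s=0$ for $t\ne s$ and $R_t^{\mathsf T}B_tC_tL_t=I_{r_t}$. These are the equations \eqref{eq:excess} with $l_t,m_t$ empty.

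\emph{Step 2: normalize the invertible term.} Let $a_0$ be invertible. Apply \eqref{eq:sym} with $Q=P$ chosen so that $Pa_0P^{-1}$ becomes $I$; for instance take $P=Q=I$ after replacing $a_0$ by $I$ via $(P,Q)=(a_0^{-1},I)$ and adjusting. Factor $a_0=L_0R_0^{\mathsf T}$ with $L_0=R_0=I$. Then $\Gamma_{00}=I$ gives $B_0C_0=I$. Now choose $R=QB_0$ with $P=Q$, which keeps $a_0=I$. This sends $B_0$ to $QB_0R^{-1}=I$ and $C_0$ to $RC_0P^{-1}=QB_0C_0Q^{-1}=I$. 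So we may assume the term is $I\otimes I\otimes I$. Since \eqref{eq:sym} preserves decompositions and ranks of first factors, and the conclusion of Step~1 holds for every decomposition with $\Delta=0$, the relations of Step~1 still hold after this change.

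\emph{Step 3: bound the other ranks.} For $s\ne 0$ the blocks $\Gamma_{0s}=B_sL_s$ and $\Gamma_{s0}=R_s^{\mathsf T}C_s$ vanish. Since $L_s$ and $R_s$ have full column rank $r_s=\rk a_s$, this gives $\rk B_s\le 3-r_s$ and $\rk C_s\le 3-r_s$. On the other hand $\Gamma_{ss}=R_s^{\mathsf T}B_sC_sL_s=I_{r_s}$, which forces $\rk B_s\ge r_s$. Hence $r_s\le 3-r_s$, that is $r_s\le 1$, so $r_s=1$ because all factors are nonzero. Then $27=\sum_t r_t=3+(r-1)$ gives $r=25$.

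\emph{Step 4: length $22$.} For $r=22$ we have $n_1+n_2+n_3=22$ and $n_1+2n_2+3n_3=27$, so $n_2+2n_3=5$. An invertible first factor would force $r=25$ by the first part, so $n_3=0$. Hence $n_2=5$ and $n_1=17$.

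The only delicate point is the normalization in Step~2. One must check that the relations $\Gamma=I$ are stated in terms of an arbitrary factorization $a_t=L_tR_t^{\mathsf T}$, so that after the symmetry we may indeed take $L_0=R_0=I$. Equivalently, one can avoid normalizing altogether and use the invariant forms $a_0B_sC_0a_s=0$ and $a_sB_0C_sa_0=0$ from \eqref{eq:excess}, together with the invertibility of $a_0$ and of $B_0C_0=a_0^{-1}$.
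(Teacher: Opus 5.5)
Your proof is correct and follows the paper's argument: the paper works directly with the invariant equations $B_0C_0=a_0^{-1}$ and $a_0B_sC_0a_s=0$ (the alternative you mention at the end) to get $\rk B_s\le 3-r_s$, and combines this with $r_s\le\rk B_s$ from the diagonal equation exactly as in your Step~3. Your normalization to $I\otimes I\otimes I$ is valid but not needed, though its opening phrase is wrong as written: no conjugation $Pa_0P^{-1}$ equals $I$ unless $a_0=I$, and it is the choice $(P,Q)=(a_0^{-1},I)$, followed by $P=Q$ and $R=QB_0$, that does the work.
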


\begin{proof}
For $\Delta=0$, \eqref{eq:excess} reads $a_tB_sC_ta_s=\delta_{ts}a_t$. Let $a_0$ be invertible. Then $a_0B_0C_0a_0=a_0$ gives $B_0C_0=a_0^{-1}$, so $C_0$ is invertible. For $s\ne 0$ the equation $a_0B_sC_0a_s=0$ shows that $B_s$ vanishes on $C_0\,\mathrm{col}(a_s)$, a space of dimension $r_s$. Hence $\rk B_s\le 3-r_s$. The equation $a_sB_sC_sa_s=a_s$ gives $r_s\le\rk B_s$. Therefore $r_s\le 3-r_s$, that is $r_s=1$, for all $s\ne 0$. Then $27=\sum_tr_t=3+(r-1)$. For $r=22$ there is therefore no invertible first factor, and $n_1+n_2=22$, $n_1+2n_2=27$ give $(17,5,0)$.
\end{proof}

Lemma~\ref{lem:S} sharpens \cite[Proposition~5.3 and Corollary~6.1]{Yan26}, by which a decomposition with $\sum_tr_t=27$ has at most one invertible first factor. The same argument for $\langle 2,2,2\rangle$ shows that a decomposition with $\sum_t r_t=8$ and an invertible first factor has length $7$ and all other first factors of rank one, as in Strassen's algorithm.

\subsection{Three invertible first factors}

\begin{lemma}\label{lem:S3}
A decomposition with three distinct invertible first factors has $\Delta\ge 2$.
\end{lemma}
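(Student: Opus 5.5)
By Lemma~\ref{lem:split}, $\Delta\ge 0$. If $\Delta=0$, Lemma~\ref{lem:S} forces every first factor other than one invertible $a_0$ to have rank one, which contradicts the presence of three invertible first factors. So the whole task is to rule out $\Delta=1$. I would work with \eqref{eq:excess} and with the rank-one idempotent $\Pi$ of Lemma~\ref{lem:gram}.

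Let $\Delta=1$, and let $a_0,a_1,a_2$ be the three distinct invertible first factors. For these we choose $L_t=a_t$ and $R_t=I$, so that $\Gamma_{ts}=B_sC_ta_s$ for $t,s\in\{0,1,2\}$. Then $l_t,m_t\in\F^3$ are vectors, and \eqref{eq:excess} gives two facts:
\[
B_tC_t=a_t^{-1}\bigl(I+l_tm_t^{\mathsf T}a_t^{-1}\bigr),\qquad \rk(B_sC_t)\le 1\quad(s\ne t).
\]
The first step repeats the argument of Lemma~\ref{lem:S}. The first identity gives $\rk(B_tC_t)\ge 2$. If some $C_t$ (resp.\ $B_s$) with $t\in\{0,1,2\}$ were invertible, then the second fact would give $\rk B_s\le 1$ (resp.\ $\rk C_t\le1$) for the other two indices, contradicting $\rk(B_sC_s)\ge2$. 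Hence $B_t$ and $C_t$ have rank exactly $2$ for $t=0,1,2$, and $B_tC_t$ has rank $2$, which means $m_t^{\mathsf T}a_t^{-1}l_t=1$.

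In terms of lines and planes in $\F^3$, these conditions read as follows. $\rk(B_tC_t)=2$ means $\ker B_t\cap\operatorname{im}C_t=0$, while $\rk(B_sC_t)\le1$ means $\ker B_s\subseteq\operatorname{im}C_t$ for all $s\ne t$. So we have three lines $k_s=\ker B_s$ and three planes $p_t=\operatorname{im}C_t$ with $k_s\subseteq p_t$ if and only if $s\ne t$. This incidence pattern is realizable: for example, $p_t$ can be the plane spanned by $k_s$ and $k_{s'}$ when the three lines are independent, or a pattern with coplanar lines may occur. So the incidences alone will not finish the argument.

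The second step uses that $\Pi=uv^{\mathsf T}$ has rank one with $v^{\mathsf T}u=1$. Then $\Pi_{ts}\Pi_{st}=u_t(v_s^{\mathsf T}u_s)v_t^{\mathsf T}$. We have $v_s^{\mathsf T}u_s=\operatorname{tr}\Pi_{ss}$, and by the first identity $\operatorname{tr}\Pi_{ss}=\operatorname{tr}(I+B_sC_sa_s)=m_s^{\mathsf T}a_s^{-1}l_s=1$. This gives, for distinct $s,t\in\{0,1,2\}$,
\[
(B_sC_ta_s)(B_tC_sa_t)=\Pi_{tt}=I+B_tC_ta_t ,
\]
together with the analogous cyclic relations $\Pi_{ts}\Pi_{sr}=\Pi_{tr}$. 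This is where I would derive the contradiction. Normalize with the symmetries \eqref{eq:sym} so that $a_0=I$, and if possible fix the positions of the lines $k_s$ and planes $p_t$ using the stabilizer. The relations then become a small polynomial system over $\F$ in the entries of $B_t$, $C_t$, $a_1$, $a_2$. I would expect a counting argument on $\Pi_{tt}$ to finish it: each $\Pi_{tt}$ is the rank-one matrix $u_tv_t^{\mathsf T}$, and $\Pi_{ts}\Pi_{st}\Pi_{tt}=\Pi_{tt}$ forces the images and kernels of the rank-one maps $B_sC_ta_s$ to line up consistently around the triangle $0,1,2$. Distinctness of $a_0,a_1,a_2$ should then be violated; I expect the case of coinciding $a_t$ to be exactly the degenerate solution.

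The main obstacle is this last step: finding a clean structural contradiction from the rank-one relations, rather than merely checking that the system has no solution. My first attempt would be the hand argument above. Failing that, I would reduce by $G$ and by $\mathrm{GL}_1$ and perform an exhaustive check over the remaining finitely many choices of $(a_1,a_2,B_t,C_t)$ over $\F$. That check is small enough to be certified in Lean alongside the other computations.
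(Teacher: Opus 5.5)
Your reduction to $\Delta=1$ and your first step are correct. Your direct argument that no $C_t$ or $B_s$ is invertible already gives $\rk(B_tC_t)=2$ and $m_t^{\mathsf T}a_t^{-1}l_t=1$, without the Schur-complement step the paper uses. The gap is that the proposal stops where the content of the lemma begins. The relations $\Pi_{ts}\Pi_{st}=\Pi_{tt}$ and $\Pi_{ts}\Pi_{sr}=\Pi_{tr}$ follow from $\Pi=uv^{\mathsf T}$ and $v_s^{\mathsf T}u_s=1$ alone. They add nothing to the equations you already have, so they cannot produce a contradiction. The ``counting argument on $\Pi_{tt}$'' is an expectation, not an argument, and the exhaustive check is neither specified nor carried out. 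A finite check of the three-term subsystem would suffice in principle, because that subsystem is in fact inconsistent. But then the computation is the proof, and it has to be done.

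The missing idea is to use the factorization of the $9\times 9$ block matrix $[B_sC_t]_{s,t}$. It is the block column $[B_s]_s$ times the block row $[C_0\,|\,C_1\,|\,C_2]$, so it has rank at most $3$. Put $u_t=a_t^{-1}l_t$ and $v_t=m_t$, so that $B_sC_t=(\delta_{ts}I+u_tv_s^{\mathsf T})a_s^{-1}$.

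\emph{First}, look at images of the $B_s$ rather than kernels. Since $\rk B_s=2=\rk(B_sC_s)$, you get $\operatorname{im}B_s=\operatorname{im}(I+u_sv_s^{\mathsf T})=v_s^{\perp}$. For $t\ne s$, the nonzero matrix $B_sC_t=u_tv_s^{\mathsf T}a_s^{-1}$ has image $\F u_t\subseteq v_s^{\perp}$. Hence $v_s^{\mathsf T}u_t=\delta_{st}$, and $v_0,v_1,v_2$ is a basis.

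\emph{Second}, the rows of $B_s[C_0\,|\,C_1\,|\,C_2]$ lie in the row space $V$ of $[C_0\,|\,C_1\,|\,C_2]$, and $\dim V\le 3$. Take $r\ne s$ and let $x$ be the third index. The row $v_r^{\mathsf T}B_s[C_0\,|\,C_1\,|\,C_2]$ equals $v_r^{\mathsf T}a_s^{-1}$ in block $s$, $v_s^{\mathsf T}a_s^{-1}$ in block $r$, and $0$ in block $x$. The projection of $V$ to block $x$ contains the two independent vectors $v_r^{\mathsf T}a_x^{-1}$ with $r\ne x$. So the vectors of $V$ vanishing on block $x$ form a space of dimension at most $1$. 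Therefore the nonzero rows obtained from $(r,s)$ and from $(s,r)$ coincide. This gives $v_r^{\mathsf T}a_s^{-1}=v_r^{\mathsf T}a_r^{-1}$ for all $r\ne s$. So $a_0^{-1},a_1^{-1},a_2^{-1}$ agree on the basis $v_0,v_1,v_2$, and the three first factors are equal.

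Your incidence analysis of $\ker B_s$ and $\operatorname{im}C_t$ treats each matrix separately. It cannot see this joint constraint on the row space of the $C_t$, which is exactly what rules out distinct invertible first factors.
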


\begin{proof}
By Lemma~\ref{lem:S} we may assume $\Delta=1$, so that $\Pi=pq^{\mathsf T}$ with $q^{\mathsf T}p=1$. Let $x,y$ range over the indices of the three invertible terms, take $L_x=a_x$ and $R_x=I$, and let $p_x,q_x\in\F^3$ be the blocks of $p$ and $q$. Since $\Gamma=I+\Pi$, \eqref{eq:gram} gives
\[
Z_{yx}:=B_yC_x=(\delta_{xy}I+p_xq_y^{\mathsf T})\,a_y^{-1}.
\]
The $9\times 9$ matrix $[Z_{yx}]_{y,x}$ is the product of the block column $[B_y]_y$ and the block row $[C_x]_x$, so its rank is at most $3$.

(i) Each $Z_{xx}$ is singular. Otherwise, for $y\ne x$, the $6\times 6$ submatrix on the blocks $x,y$ has rank $3+\rk(Z_{yy}-Z_{yx}Z_{xx}^{-1}Z_{xy})\ge 3+(2-1)$, because $Z_{yy}$ has rank at least $2$ and the correction has rank at most $1$. This contradicts the bound $3$. Hence $\det(I+p_xq_x^{\mathsf T})=1+q_x^{\mathsf T}p_x=0$, so $q_x^{\mathsf T}p_x=1$, $\rk Z_{xx}=2$ and $\mathrm{im}\,Z_{xx}=q_x^{\perp}$.

(ii) $\rk B_x=\rk C_x=2$. Both ranks are at least $\rk Z_{xx}=2$. If $C_x$ were invertible, then $Z_{yx}=B_yC_x$ would have rank at least $2$ for $y\ne x$, but $Z_{yx}=p_xq_y^{\mathsf T}a_y^{-1}$ has rank at most $1$. The same argument applies to $B_y$.

(iii) $q_y^{\mathsf T}p_x=\delta_{xy}$. By (i) and (ii), $\mathrm{im}\,B_y=\mathrm{im}\,Z_{yy}=q_y^{\perp}$. For $x\ne y$ the matrix $Z_{yx}=p_xq_y^{\mathsf T}a_y^{-1}$ is nonzero and its image $\F p_x$ lies in $\mathrm{im}\,B_y$, so $q_y^{\mathsf T}p_x=0$.

(iv) Let $V$ be the row space of $[C_x]_x$, of dimension at most $3$. Each block row $B_y[C_x]_x$ has its rows in $V$. For $z\ne y$ the row $q_z^{\mathsf T}B_y[C_x]_x$ has $q_z^{\mathsf T}a_y^{-1}$ in block $y$, $q_y^{\mathsf T}a_y^{-1}$ in block $z$, and $0$ in the third block. Call it $v_{y,z}$.

(v) Fix a block $x_0$ and let $y,z$ be the other two indices. The projection of $V$ to block $x_0$ contains $q_z^{\mathsf T}a_{x_0}^{-1}$ for both $z\ne x_0$, and these are independent. Hence the vectors of $V$ that vanish on block $x_0$ form a space of dimension at most $1$. It contains the nonzero vectors $v_{y,z}$ and $v_{z,y}$, which are therefore equal. This gives $q_z^{\mathsf T}a_y^{-1}=q_z^{\mathsf T}a_z^{-1}$ and $q_y^{\mathsf T}a_y^{-1}=q_y^{\mathsf T}a_z^{-1}$.

(vi) Over the three choices of $x_0$ we get $q_x^{\mathsf T}a_t^{-1}=q_x^{\mathsf T}a_s^{-1}=q_x^{\mathsf T}a_w^{-1}$ for all $x$, where $t,s,w$ are the three indices. The $q_x$ form a basis by (iii), so the three first factors are equal, a contradiction.
\end{proof}

\subsection{An exhaustive computation}
For larger configurations we solve \eqref{eq:excess} by computer. Fix a type and a value of $\Delta$. Using \eqref{eq:sym} and the action of $\mathrm{GL}_\Delta(\F)$ we bring one term, the \emph{anchor}, to $a_0=I$ (or to a fixed matrix of rank two when there is no invertible term) and its data $(B_0,C_0,l_0,m_0)$ to one of finitely many orbit representatives: $7$ for $\Delta=1$ and $36$ for $\Delta=2$ with an invertible anchor, and $4$ for $\Delta=0$ with an anchor of rank two. The other terms become \emph{vertices}: tuples $(a,B,C,l,m)$ that satisfy \eqref{eq:excess} with themselves and with the anchor. For fixed $a$ these conditions are linear in $B$ once $m$ is fixed and linear in $C$ once $B$ and $l$ are fixed, so the vertices are found by solving linear systems. Two vertices are adjacent when their first factors differ and \eqref{eq:excess} holds for the pair. A decomposition of the given type yields a clique with the prescribed number of vertices of each rank. We first list the cliques formed by the anchor's companions of rank two and three (the \emph{cores}). For each core we then bound the largest set of pairwise adjacent rank-one vertices compatible with the core and having distinct first factors, using a colouring of the compatibility graph as the certificate.

\begin{proposition}[$\dagger$]\label{prop:types}
No decomposition of length $22$ has type $(17,5,0)$, $(18,2,2)$, $(17,4,1)$, $(17,3,2)$ or $(18,1,3)$.
\end{proposition}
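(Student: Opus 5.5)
The five types split by excess. Since $\Delta=n_2+2n_3-5$ at length $22$, type $(17,5,0)$ has $\Delta=0$, types $(18,2,2)$ and $(17,4,1)$ have $\Delta=1$, and types $(17,3,2)$ and $(18,1,3)$ have $\Delta=2$. By Corollary~\ref{cor:distinct} the $22$ first factors are pairwise distinct, so any clique we look for must have distinct first factors. We treat each type with the exhaustive computation just described, solving \eqref{eq:excess} for the fixed $\Delta$.

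For $(17,5,0)$ there is no invertible term, so we take as anchor a term of rank two. Using \eqref{eq:sym} we bring it to a fixed rank-two matrix with $(B_0,C_0)$ one of the $4$ representatives. For the other four types there is an invertible term. We normalize it to $a_0=I$ with $(B_0,C_0,l_0,m_0)$ among the $7$ representatives for $\Delta=1$ or the $36$ for $\Delta=2$, also using the $\mathrm{GL}_\Delta(\F)$ action.

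For each anchor representative we enumerate all vertices $(a,B,C,l,m)$, namely solutions of \eqref{eq:excess} with $t=s$ and with the anchor. For each $a$ we loop over $m$, solve the system that is linear in $B$, and then, for each $B$ and $l$, solve the system that is linear in $C$. We then build the adjacency graph on vertices with distinct first factors satisfying the pairwise equation in both orders. The cores are the cliques formed by the remaining $n_2+n_3-1$ terms of ranks two and three. We list them exhaustively up to the stabilizer of the anchor data, which is small because the core sizes are at most $5$.

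For each core we restrict the rank-one vertices to those adjacent to every core vertex. We then certify that this compatibility graph has no clique of size $n_1\in\{17,18\}$ with distinct first factors. The certificate is a proper colouring of the complement graph with fewer than $n_1$ colour classes, where vertices sharing a first factor are merged into one class. Each class is then an independent set in the compatibility graph, so a clique meets each class at most once. If no colouring with fewer than $n_1$ classes is found, we fall back to branch and bound on the clique.

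I expect the main obstacle to be the $\Delta=2$ cases. There the vertex sets are much larger, because $l,m$ range over $3\times 2$ matrices, and the rank-one compatibility graphs may be dense enough that greedy colourings fail. To keep the graphs small, I would first exploit the residual freedom $\mathrm{GL}_2(\F)$ acting jointly with the anchor's stabilizer. For the checker side, each step reduces to verifying that listed vertices solve the linear equations, that the enumeration of solutions of each linear system is complete (checked via rank and kernel), and that the colourings are proper.
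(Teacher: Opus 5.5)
Your plan is essentially the paper's own computation. It has the same split of the five types by $\Delta$, the same anchor normalization to $4$, $7$ or $36$ orbit representatives, vertices found from the two linear systems, cores formed by the rank-two and rank-three companions, and a colouring certificate bounding the rank-one cliques with distinct first factors.

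One slip needs fixing: the certificate must be a proper colouring of the compatibility graph itself (equivalently, a clique cover of its complement). Only then is each colour class an independent set in the compatibility graph, as your argument requires. A proper colouring of the complement would bound the independence number of the compatibility graph, not its clique number.
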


\begin{table}[ht]
\caption{The computation behind Proposition~\ref{prop:types}. The last column is the number of rank-one terms by which a core would have to be extended, and no core extends that far.}\label{tab:types}
\centering
\small
\begin{tabular}{@{}cccrc@{}}
\toprule
type & $\Delta$ & anchor orbits & cores & needed\\
\midrule
$(17,5,0)$ & $0$ & $4$ & $23{,}175$ & $17$\\
$(18,2,2)$ & $1$ & $7$ & $0$ & $18$\\
$(17,4,1)$ & $1$ & $7$ & $708$ & $17$\\
$(17,3,2)$ & $2$ & $36$ & $96{,}717$ & $17$\\
$(18,1,3)$ & $2$ & $36$ & $68$ & $18$\\
\bottomrule
\end{tabular}
\end{table}

Lemmas~\ref{lem:S} and~\ref{lem:S3} exclude $(18,3,1)$, $(19,1,2)$ and $(19,0,3)$. Since $\Delta=0$ forces one of the types $(17,5,0)$, $(18,3,1)$, $(19,1,2)$, we obtain the following.

\begin{corollary}[$\dagger$]\label{cor:types}
Every decomposition of length $22$ satisfies $\sum_t\rk(a_t)\ge 28$, and none has type $(19,0,3)$, $(18,2,2)$, $(17,4,1)$, $(17,3,2)$ or $(18,1,3)$.
\end{corollary}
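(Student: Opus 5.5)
The corollary is a bookkeeping consequence of the earlier results, and I would assemble it as follows. Fix a decomposition of length $22$. By Section~\ref{sec:prelim} all its factors are nonzero, so its type $(n_1,n_2,n_3)$ satisfies $n_1+n_2+n_3=22$. We have $\Delta=n_2+2n_3-5$, and Lemma~\ref{lem:split} gives $\Delta\ge 0$.

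\emph{Step 1: $\Delta\ge 1$.} Suppose $\Delta=0$, so $n_2+2n_3=5$. The possibilities are $(n_2,n_3)\in\{(5,0),(3,1),(1,2)\}$, that is, the types $(17,5,0)$, $(18,3,1)$ and $(19,1,2)$. The last two contain an invertible first factor. Lemma~\ref{lem:S} then forces $r=25\ne 22$, so these types are impossible. Type $(17,5,0)$ is excluded by Proposition~\ref{prop:types}. Hence $\Delta\ge 1$, that is, $\sum_t\rk(a_t)=27+\Delta\ge 28$.

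\emph{Step 2: the listed types.} Type $(19,0,3)$ has $\Delta=1$ and three invertible first factors. By Corollary~\ref{cor:distinct} these three factors are pairwise distinct. Lemma~\ref{lem:S3} then requires $\Delta\ge 2$, which is a contradiction. The remaining types $(18,2,2)$, $(17,4,1)$, $(17,3,2)$ and $(18,1,3)$ are excluded directly by Proposition~\ref{prop:types}.

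The only point needing care is the distinctness hypothesis of Lemma~\ref{lem:S3}. It comes from Corollary~\ref{cor:distinct}, which rests on Theorem~\ref{thm:point}; this is why the corollary carries a $\dagger$. The computational weight lies entirely in Proposition~\ref{prop:types}, so I expect no real obstacle here beyond correctly enumerating the $\Delta=0$ types.
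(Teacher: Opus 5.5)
Your argument is correct and matches the paper's: Lemma~\ref{lem:S} excludes the $\Delta=0$ types $(18,3,1)$ and $(19,1,2)$. Proposition~\ref{prop:types} excludes $(17,5,0)$ and the other four listed types, and Lemma~\ref{lem:S3}, with distinctness from Corollary~\ref{cor:distinct}, excludes $(19,0,3)$, which has $\Delta=1$ (the $\dagger$ comes from Proposition~\ref{prop:types} as well as from Corollary~\ref{cor:distinct}, not from the latter alone).
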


\section{Two first factors on a line}\label{sec:l479}

Let $L_0$ be the line spanned by $e_{11}$ and $e_{12}+e_{21}$. Its orbit under $G$ consists of $441$ lines, each containing one matrix of rank one and two of rank two. By Table~\ref{tab:raise} we have $R(L_0)\ge 20$, and Rudich and Rousseau prove $R(L_0)\le 20$ in Lean \cite{RR26}. The capacity of $L_0$ is therefore $2$, so Lemma~\ref{lem:cap} allows two first factors of a decomposition of length $22$ in a line of this orbit. We show that this does not happen when at least $14$ first factors have rank one, which is case (II) of the search in Section~\ref{sec:search}.

\begin{theorem}[$\dagger$]\label{thm:l479}
In a decomposition of length $22$ with at least $14$ first factors of rank one, each line in the $G$-orbit of $L_0$ contains at most one first factor.
\end{theorem}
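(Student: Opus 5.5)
By the symmetry of $G$ it suffices to treat $L_0$ itself. Suppose a decomposition $T=\sum_{t=1}^{22}a_t\otimes b_t\otimes c_t$ with at least $14$ rank-one first factors has two first factors in $L_0$. By Corollary~\ref{cor:distinct} they are distinct, so they are two of the three nonzero elements $e_{11}$, $e_{12}+e_{21}$, $e_{11}+e_{12}+e_{21}$. The stabilizer of $L_0$ in $G$ may permute the two rank-two elements, which would leave two cases: a rank-one factor paired with a rank-two factor, and the two rank-two factors. I would compute this stabilizer first and fix one representative pair in each case.

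Next I would project along $L_0$. Applying $\pi_{L_0}\otimes1\otimes1$ removes the two terms in $L_0$ and leaves a decomposition of $T_{L_0}$ of length $20$. Since $R(L_0)\ge 20$ by Table~\ref{tab:raise}, this decomposition is optimal. Its first factors are $20$ nonzero cosets in $A/L_0$, and their multiplicities $n(x)$ form a coset profile in the sense of \eqref{eq:coset} with $\ell=21$. I would enumerate these profiles up to the stabilizer of $L_0$, as in Section~\ref{subsec:profiles}. I would add two constraints that optimality alone does not give.

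\begin{itemize}
\item Each coset $x$ has three lifts, and the rank-one condition forces at least $14$ of the $22$ lifted first factors to have rank one. At most one of the two terms in $L_0$ has rank one, so at least $13$ of the $20$ cosets must contain a rank-one matrix. This restricts the profiles to cosets that admit rank-one lifts in sufficient number.
\item The lifts of all $22$ first factors must be distinct, by Corollary~\ref{cor:distinct}, and must satisfy the capacity bounds for every subspace $S'\subseteq A$, not only for those containing $L_0$. For each coset I would therefore carry the set of admissible lifts, pruning by the capacities of subspaces through the chosen lifts and by Corollary~\ref{cor:types} on the type.
\end{itemize}

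The final step is lifting. For each surviving profile, together with a choice of lifts $a_t$, I would run the completion search of Section~\ref{sec:tsolve}. Its target is $T$ itself, with the $22$ first factors fixed, the two factors in $L_0$ included. An alternative is to first find all decompositions of $T_{L_0}$ with the given cosets, and then check whether some choice of lifts, together with second and third factors for the two terms in $L_0$, makes up the difference $T-\sum_t a_t\otimes b_t\otimes c_t$. That difference would have to equal $e\otimes B\otimes C+e'\otimes B'\otimes C'$ for the two fixed matrices $e,e'\in L_0$, which is a linear condition on $(B,C)$ and $(B',C')$ once the remaining terms are fixed.

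I expect the main obstacle to be the size of the enumeration. A line has a quotient with $127$ cosets, and each coset has three lifts, so the number of profiles times lift choices could be enormous. Table~\ref{tab:raise} already shows billions of leaves for lines. The hypothesis $n_1\ge14$ is what I would lean on to make this tractable, applying it early in the branch and bound through the LP bounds. If it is still too large, I would prune instead by counting which cosets can carry rank-one lifts, bounding that count with an LP, and fixing those lifts before the rest. Finally, the refutation certificates would be emitted for checking in Lean.
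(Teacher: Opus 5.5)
Your outline matches the paper's proof up to the lifting step. You reduce to $L_0$, project the other $20$ terms to a decomposition of $T_{L_0}$, enumerate coset profiles satisfying \eqref{eq:coset} and $\sum_{x\in C_1}n(x)\ge 13$ over the $39$ cosets containing a rank-one matrix, and run the completion search with target $T_{L_0}$. You should say explicitly that the third nonzero element of $L_0$ is not a first factor, because $\cp(L_0)=2$, so that all $20$ cosets are nonzero.

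The gap is in deciding whether a decomposition of the quotient lifts. A nonzero coset of a line has four elements, not three. Your first option runs the completion search against $T$ for each choice of lifts, with capacity and type pruning on the lifts. That adds up to $4^{20}$ lift choices at each of the roughly $1.9\cdot 10^{11}$ leaves of the profile enumeration, and it replaces the $127$ coset coordinates by individual matrices. Your second option, listing the decompositions of $T_{L_0}$ and testing them, is the right one. As you describe it, though, it still ranges jointly over all lifts. Also, the condition is not linear in $(B,C)$: it asks that two slices of the residual have rank one.

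The missing idea is the decoupling in Lemma~\ref{lem:lift}. Write $a_t=a_t^0+e_t$ with $e_t\in L_0$. The residual is then $p\otimes M_p+q\otimes M_q$. Here $M_p$ depends only on the bits $\chi_0(e_t)$, $M_q$ only on the bits $\chi_1(e_t)$, and $M_p+M_q$ only on their sums, and any two of these three families can be chosen independently.

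Hence two terms with distinct first factors in $L_0$ complete the decomposition for some choice of lifts exactly when two of $V_{\chi_0}$, $V_{\chi_1}$, $V_{\chi_0+\chi_1}$ each contain a rank-one matrix. Each such test is $511$ linear systems, one for each vector $b$, in $c$ and the free bits. No lift is ever enumerated, which is what makes checking the $540$ quotient decompositions (found for only $10$ profiles) cheap and certifiable.

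Because the criterion treats all three pairs in $L_0$ at once, it also removes your case split. As you state that split, fixing a representative pair and then enumerating profiles up to the full stabilizer of $L_0$ uses the same symmetry twice. The stabilizer swaps the two rank-two elements, so you would miss the pair $\{e_{11},e_{11}+e_{12}+e_{21}\}$. You would have to reduce only by the stabilizer of the pair, or test both pairs in its orbit.
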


The proof uses a criterion for lifting a decomposition of a quotient $T_L$ to a decomposition of $T$. Let $L$ be a line with nonzero elements $p$, $q$ and $p+q$, and choose linear forms $\chi_0,\chi_1$ on $A$ with $\chi_0(p)=1$, $\chi_0(q)=0$, $\chi_1(p)=0$ and $\chi_1(q)=1$. Let
\[
T_L=\sum_{t=1}^{20}\bar a_t\otimes b_t\otimes c_t
\]
be a decomposition with nonzero factors. For a linear form $\varphi$ on $A$ put
\[
V_\varphi=\Bigl\{\,T(\varphi)-\sum_{t=1}^{20}\varphi(a_t)\,b_tc_t^{\mathsf T}\ :\ a_t\in\bar a_t\text{ for all }t\,\Bigr\},
\]
a set of $9\times 9$ matrices, where $T(\varphi)$ is the contraction of Section~\ref{sec:prelim}.

\begin{lemma}[Lifting criterion]\label{lem:lift}
The following are equivalent.
\begin{enumerate}[label=(\alph*)]
\item There are $a_t\in\bar a_t$ and two further terms with distinct first factors in $L$ that together with the terms $a_t\otimes b_t\otimes c_t$ form a decomposition of $T$ of length $22$.
\item At least two of the sets $V_{\chi_0}$, $V_{\chi_1}$ and $V_{\chi_0+\chi_1}$ contain a matrix of rank one.
\end{enumerate}
\end{lemma}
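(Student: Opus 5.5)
The plan is to reduce both conditions to a statement about the single difference tensor $D=T-\sum_{t=1}^{20}a_t\otimes b_t\otimes c_t$, and then to show that the choices of lifts can be made independently for the different linear forms.

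\emph{Step 1: $D$ lies in $L\otimes B\otimes C$.} Since $(\pi_L\otimes1\otimes1)(T)=T_L=\sum_t\bar a_t\otimes b_t\otimes c_t$, the tensor $D$ is killed by $\pi_L\otimes 1\otimes 1$. Hence $D\in L\otimes B\otimes C$, and we can write $D=p\otimes M_p+q\otimes M_q$ with $M_p,M_q\in B\otimes C$, which we view as $9\times 9$ matrices. Contracting with a form $\varphi$ gives $D(\varphi)=\varphi(p)M_p+\varphi(q)M_q$. In particular
\[
D(\chi_0)=M_p,\qquad D(\chi_1)=M_q,\qquad D(\chi_0+\chi_1)=M_p+M_q .
\]
By construction, $D(\varphi)=T(\varphi)-\sum_t\varphi(a_t)b_tc_t^{\mathsf T}$ is an element of $V_\varphi$.

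\emph{Step 2: the two extra terms in terms of $D$.} The three forms $\chi_0,\chi_1,\chi_0+\chi_1$ restrict to the three nonzero forms on $L$. Each of them vanishes on exactly one nonzero element of $L$: $\chi_0$ on $q$, $\chi_1$ on $p$, and $\chi_0+\chi_1$ on $p+q$. Suppose $D=u\otimes X+u'\otimes Y$ with $u\ne u'$ nonzero in $L$ and $X,Y$ of rank one. Let $\varphi$ be the form on $L$ vanishing on $u'$ and $\psi$ the form vanishing on $u$. Then $\varphi(u)=\psi(u')=1$, so $D(\varphi)=X$ and $D(\psi)=Y$, and two of the three contractions have rank one. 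Conversely, suppose $D(\varphi)$ and $D(\psi)$ have rank one for two distinct forms among the three. Let $u$ be the element of $L$ with $\varphi(u)=1$, $\psi(u)=0$, and $u'$ the element with $\varphi(u')=0$, $\psi(u')=1$. Then $u\ne u'$, and $D=u\otimes D(\varphi)+u'\otimes D(\psi)$, because both sides have the same contractions with the two independent forms $\varphi,\psi$ on $L$. Since rank-one matrices are nonzero, these two terms have nonzero factors. This proves (a)$\Rightarrow$(b) immediately, and it proves (b)$\Rightarrow$(a) provided the two rank-one matrices arise from one common choice of lifts.

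\emph{Step 3: the main obstacle, decoupling the lifts.} Condition (b) only says that two of the sets $V_\varphi$ each contain a rank-one matrix, possibly for different lifts. Fix base lifts $a_t^0$. Every lift has the form $a_t=a_t^0+\lambda_tp+\mu_tq$ with $\lambda_t,\mu_t\in\F$. Then $\chi_0(a_t)=\chi_0(a_t^0)+\lambda_t$ and $\chi_1(a_t)=\chi_1(a_t^0)+\mu_t$. So the element of $V_{\chi_0}$ depends only on the vector $\lambda$, the element of $V_{\chi_1}$ only on $\mu$, and the element of $V_{\chi_0+\chi_1}$ only on $\nu=\lambda+\mu$. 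Any two of $\lambda,\mu,\nu$ can be prescribed independently, with the third determined by $\lambda+\mu+\nu=0$. Hence rank-one elements of two of the sets, each witnessed by its own lift, are realized simultaneously by a single choice of lifts. Applying Step 2 to that choice gives (a).
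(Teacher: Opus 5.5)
Your proof is correct and follows the paper's argument. In both, the difference tensor lies in $L\otimes B\otimes C$, and its contractions with $\chi_0,\chi_1,\chi_0+\chi_1$ are the corresponding elements of the three sets $V$. For a fixed choice of lifts, the two terms exist exactly when two of these contractions have rank one, and the three families of bits can be chosen independently in pairs. The only cosmetic difference is that you locate the two first factors through the form that vanishes on each of them, where the paper writes out the three rewritings of $p\otimes M_p+q\otimes M_q$ explicitly.
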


\begin{proof}
Fix representatives $a_t^0\in\bar a_t$ and write $a_t=a_t^0+e_t$ with $e_t\in L$. As $e_t$ runs through $L$, the pair $(\chi_0(e_t),\chi_1(e_t))$ takes each value in $\F^2$ once. For a choice of the $e_t$ the tensor $T-\sum_ta_t\otimes b_t\otimes c_t$ maps to $0$ in $(A/L)\otimes B\otimes C$, so it equals $p\otimes M_p+q\otimes M_q$ for some matrices $M_p$ and $M_q$. Contracting with $\chi_0$, $\chi_1$ and $\chi_0+\chi_1$ shows that $M_p$, $M_q$ and $M_p+M_q$ are the elements of $V_{\chi_0}$, $V_{\chi_1}$ and $V_{\chi_0+\chi_1}$ given by this choice. Two terms with distinct first factors $x,y\in L$ and nonzero second and third factors have the form $x\otimes N_x+y\otimes N_y$ with $\rk N_x=\rk N_y=1$. Since
\[
p\otimes M_p+q\otimes M_q=p\otimes(M_p+M_q)+(p+q)\otimes M_q=q\otimes(M_p+M_q)+(p+q)\otimes M_p ,
\]
the two terms exist for this choice exactly when two of $M_p$, $M_q$ and $M_p+M_q$ have rank one. The element of $V_{\chi_0}$ depends only on the bits $\chi_0(e_t)$, that of $V_{\chi_1}$ only on the bits $\chi_1(e_t)$, and that of $V_{\chi_0+\chi_1}$ only on their sums. Any two of these three families of bits can be chosen independently. Hence two of the three matrices can be made of rank one by the same choice if and only if each of them can be made of rank one by some choice.
\end{proof}

For a single term whose slice has rank one, a related statement is part of \cite[Proposition~3.1]{LM17}, and Wang's forced-product technique \cite{Wan26a} also enumerates the free coefficients of a residual and tests its rank. To test whether $V_\varphi$, an affine space of matrices, contains a matrix $bc^{\mathsf T}$ of rank one, we solve for each of the $511$ nonzero vectors $b$ a linear system in $c$ and the coefficients.

\begin{proof}[Proof of Theorem~\ref{thm:l479}]
By symmetry it suffices to treat $L_0$. Suppose two first factors lie in $L_0$. They are distinct by Corollary~\ref{cor:distinct}, and the capacity of $L_0$ is $2$, so the third nonzero matrix of $L_0$ is not a first factor. The other $20$ terms project to a decomposition of $T_{L_0}$ of length $20$ with nonzero factors, and its coset profile $n$ satisfies \eqref{eq:coset} with $\ell-1=20$. At most one of the two first factors in $L_0$ has rank one, so at least $13$ of the other $20$ have rank one. Only $39$ of the $127$ nonzero cosets of $L_0$ contain a matrix of rank one, which gives the additional inequality
\[
\sum_{x\in C_1}n(x)\ge 13,
\]
where $C_1$ is the set of these $39$ cosets. We enumerate the coset profiles that satisfy these inequalities, as in Section~\ref{subsec:profiles}, and run the completion search with target $T_{L_0}$ on each. When it finds decompositions of $T_{L_0}$ with a given profile, we list all of them and check that for each at most one of the three sets in Lemma~\ref{lem:lift}(b) contains a matrix of rank one. By Lemma~\ref{lem:lift} no profile comes from a decomposition of $T$ of length $22$. The enumeration has $737{,}787{,}637{,}822$ nodes and $189{,}298{,}650{,}178$ leaves. For $10$ of the profiles the completion search finds decompositions of $T_{L_0}$, $540$ in all, and none of them lifts.
\end{proof}

The reduction to coset profiles works for any subspace in place of $L_0$ and any number of first factors in it, with the inequality for $C_1$ adjusted to the matrices of rank one in the subspace. The formalization proves it in this generality, together with the refutation of profiles that admit no decomposition of the quotient.

\section{The search at \texorpdfstring{$r=22$}{r=22}}\label{sec:search}

\subsection{Admissible sets}
By Corollary~\ref{cor:distinct} the first factors of a decomposition of length $22$ form a set $F\subset A\setminus\{0\}$ with $|F|=22$. Let $k$ be the number of elements of rank one in $F$. We split the proof into three cases: (I) $k\le 12$, (I$'$) $k=13$ and (II) $k\ge 14$, which is the range of Theorem~\ref{thm:l479}.

\begin{definition}\label{def:adm}
A set $F\subset A\setminus\{0\}$ with $|F|=22$ is \emph{admissible} if
\begin{enumerate}[label=(\alph*)]
\item $|F\cap S|\le\cp(S)$ for every subspace $S$ in the family $\cK$ described below, and
\item $|\{a\in F: \varphi(a)=1\}|\ge 3\rk\varphi$ for every nonzero linear form $\varphi$ on $A$,
\end{enumerate}
and, in case (II), in addition
\begin{enumerate}[label=(\alph*),resume]
\item the type of $F$ is not excluded by Corollary~\ref{cor:types}, and
\item each line in the $G$-orbit of $L_0$ (Section~\ref{sec:l479}) contains at most one element of $F$.
\end{enumerate}
\end{definition}

The family $\cK$ consists of the subspaces whose capacity is not implied by the capacity of another subspace, about $3.8\cdot 10^{6}$ of them. For a subspace $S$ of dimension at most $4$ the capacity can be lowered further. Since the elements of $F$ are distinct, $|F\cap S|\le k$ whenever no $k+1$ nonzero elements of $S$ respect the capacities of all subspaces of $S$. We use the bound obtained in this way for five classes of subspaces, for which it is smaller than $\cp(S)$. By Lemma~\ref{lem:cap}, Corollary~\ref{cor:types} and Theorem~\ref{thm:l479}, the set of first factors of a decomposition of length $22$ is admissible, and so are its images under $G$.

\subsection{Orderly generation}\label{subsec:orderly}
We order the $511$ nonzero matrices by rank, and matrices of equal rank by the integer $\sum_{i,j}a_{ij}2^{3i+j-4}$. In case (II) the order is rank one, rank two, rank three. In cases (I) and (I$'$) it is rank three, rank two, rank one, which keeps the search tree small when there are few elements of rank one. For finite sets $F\ne F'$ of the same size we write $F<F'$ if the smallest element of the symmetric difference belongs to $F$. A set is \emph{canonical} if it is the smallest set in its $G$-orbit. The search follows the design of Wang's profile enumerator \cite[\S3.3]{Wan26b}. Orderly generation goes back to Read \cite{Rea78} (see also \cite{McK98}).

\begin{lemma}\label{lem:canon}
Every $G$-orbit of sets contains a canonical set, and every initial segment of a canonical set is canonical.
\end{lemma}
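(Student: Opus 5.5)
The first assertion is immediate from finiteness. The group $G$ is finite, so a $G$-orbit of $22$-element sets is a finite nonempty collection of sets of the same size. The relation $<$ on such sets is a strict total order. It is irreflexive and total because two distinct sets have a nonempty symmetric difference, and its least element lies in exactly one of them. Transitivity is the point to verify. Encode a set $F$ by its indicator vector, listed in the fixed order of the $511$ matrices. Then $F<F'$ says exactly that at the first position where the indicators differ, $F$ has a $1$. This is lexicographic comparison of the indicator vectors with $1$ placed before $0$, and lexicographic order is transitive. Hence every orbit has a unique least element, and that element is canonical.

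For the second assertion, let $F$ be canonical of size $n$. Let $F_m$ be its initial segment, made of the $m$ smallest elements of $F$ in the fixed order. I will argue by contradiction: suppose some $g\in G$ gives $gF_m<F_m$. Both sets have size $m$. Let $x$ be the smallest element of $gF_m\triangle F_m$, so $x\in gF_m\setminus F_m$. Every element smaller than $x$ lies in both $gF_m$ and $F_m$ or in neither.

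The goal is to derive $gF<F$. Since $x\notin F_m$ and $F_m$ consists of the $m$ smallest elements of $F$, either $x\notin F$, or $x\in F$ with $x$ larger than every element of $F_m$. The second case is impossible. The elements of $F_m$ that are smaller than $x$ also lie in $gF_m$, so all $m$ of them would lie in $gF_m$, which already has $m$ elements. Then $gF_m$ would equal $F_m$ below $x$ and also contain $x$, giving $m+1$ elements. So $x\notin F$.

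Next I compare $gF$ with $F$ below $x$. The set $F$ agrees with $F_m$ on elements smaller than $x$, since all such elements of $F$ are among its $m$ smallest. The set $gF$ contains $gF_m$, so below $x$ it contains everything in $F_m$ below $x$. Its extra elements $g(F\setminus F_m)$ might also fall below $x$, however, and then the comparison could go the other way. This is the delicate step, and I expect it to be the main obstacle.

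My first attempt is to compare by cardinality. Among the elements smaller than $x$, $gF$ has at least as many as $F$, and $x\in gF\setminus F$. If $gF$ contains no extra element below $x$, then the first difference between $gF$ and $F$ is at $x$, and $gF<F$. Otherwise the first difference is at some smaller element lying in $gF$ but not in $F$, which again gives $gF<F$. In either case the smallest element of $gF\triangle F$ lies in $gF$, so $gF<F$, contradicting the canonicity of $F$.

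The order matters for this lemma: with lexicographic order where $1$ comes first, an element gained early is exactly what makes a set smaller. I will check this carefully and confirm that $gF$ has the same size as $F$.
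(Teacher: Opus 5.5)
Your proof is correct and follows the paper's proof. The smallest element $x$ of $gF_m\triangle F_m$ cannot exceed every element of $F_m$, which is the case the paper also excludes first. Hence every element of $F$ below $x$ lies in $F_m$ and so in $gF_m\subseteq gF$, while $x\in gF\setminus F$, which forces $gF<F$.

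One small point of bookkeeping: your counting argument rules out $x>\max F_m$ whether or not $x\in F$. It is this fact, and not merely $x\notin F$, that justifies your claim that $F$ agrees with $F_m$ below $x$.
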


\begin{proof}
The first claim is clear, since orbits are finite. For the second, let $F$ be canonical, let $c$ be a nonzero matrix and let $F_0=\{a\in F:a<c\}$. Suppose that $gF_0<F_0$ for some $g\in G$, and let $m$ be the smallest element of $gF_0\triangle F_0$. Then $m\in gF_0\setminus F_0$. If $m\ge c$, every element of $F_0$ is smaller than $m$ and therefore lies in $gF_0$, so $F_0=gF_0$ because both have the same size. This is impossible, so $m<c$. Every element of $F$ smaller than $m$ then lies in $F_0$, hence in $gF_0\subseteq gF$, while $m\in gF\setminus F$. Hence the smallest element of $gF\triangle F$ lies in $gF$, that is, $gF<F$. This contradicts the choice of $F$.
\end{proof}

The search builds sets in increasing order. A node is a canonical set $F_0$ of size less than $22$ that satisfies all constraints restricted to $F_0$. Its children are the sets $F_0\cup\{u\}$ with $u$ larger than all elements of $F_0$. By Lemma~\ref{lem:canon} the search reaches each canonical admissible set through its initial segments. It may therefore discard any child that has no canonical admissible completion.

\subsection{Discarding children}
A child $F_0\cup\{u\}$ is discarded if one of the following holds. Each condition shows that no admissible canonical set has $F_0\cup\{u\}$ as an initial segment.
\begin{enumerate}[label=(\roman*)]
\item $F_0\cup\{u\}$ violates a capacity in $\cK$.
\item For some nonzero $\varphi$, the elements of $F_0\cup\{u\}$ with $\varphi(a)=1$, together with all larger matrices with $\varphi(a)=1$ that can still be added, number fewer than $3\rk\varphi$.
\item The case condition on $k$ cannot be met by the matrices that can still be added.
\item In case (II), every type that a completion can have is excluded by Corollary~\ref{cor:types}.
\item Fewer than $21-|F_0|$ matrices larger than $u$ remain that can still be added.
\item No set of $21-|F_0|$ further matrices can be added without violating a capacity in $\cK$. The search proves this by a small branch and bound over the remaining candidates. When such a tree would be large, the child is kept as a node instead.
\item $F_0\cup\{u\}$ is not canonical, as witnessed by an element $g\in G$ with $g(F_0\cup\{u\})<F_0\cup\{u\}$.
\end{enumerate}
The same tests applied to a node itself may discard all of its children at once. The leaves of the search are the canonical admissible sets.

\subsection{Results}
Table~\ref{tab:search} gives the size of the search in each case.

\begin{table}[ht]
\caption{Size of the search at $r=22$.}\label{tab:search}
\centering
\begin{tabular}{@{}lrrr@{}}
\toprule
case & nodes & admissible sets & refuted\\
\midrule
(I) $k\le 12$ & $629{,}424$ & $3{,}580$ & all\\
(I$'$) $k=13$ & $19{,}547{,}416$ & $146{,}319$ & all\\
(II) $k\ge 14$ & $3{,}377{,}202{,}242$ & $75{,}177{,}701$ & all\\
\bottomrule
\end{tabular}
\end{table}

\begin{proposition}[$\dagger$]\label{prop:search}
Every canonical admissible set appears as a leaf of the search, and no leaf is the set of first factors of a decomposition of $T$.
\end{proposition}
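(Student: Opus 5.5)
The proposition has two parts, and I would prove them separately. The first is a completeness statement about the orderly generation. The second is a refutation statement about each leaf, which I would reduce to the completion search of Section~\ref{sec:tsolve}, checked by certificates.

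For completeness, let $F$ be a canonical admissible set. I would show by induction on $m$ that every initial segment $F_m$ of size $m\le 21$ is a node of the search and that $F_{m+1}$ is one of its retained children. By Lemma~\ref{lem:canon} every $F_m$ is canonical. The constraints of Definition~\ref{def:adm} restricted to $F_m$ hold, since capacities are monotone under subsets and the other conditions are checked only for feasibility. The next element $u$ is larger than all elements of $F_m$, so $F_{m+1}$ is a child of $F_m$. It then remains to check that none of the discarding rules (i)--(vii) fires on $F_{m+1}$, nor on $F_m$ as a whole.
\begin{itemize}
\item Rules (i) and (vii) fail because $F_{m+1}\subseteq F$ respects the capacities and is canonical.
\item Rules (ii), (iii), (iv), (v) and (vi) fail because $F\setminus F_{m+1}$ is itself a set of $22-(m+1)$ larger matrices that can be added. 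This set witnesses the count for each $\varphi$ and the case condition on $k$. Its type is not excluded. It has the required size, and it respects the capacities in $\cK$.
\end{itemize}
The condition ``can still be added'' must be read as ``not already violating a capacity with the current set''. One point needs care here: each rule must test a necessary condition for the existence of an admissible canonical completion, never a sufficient one. At size $22$ the leaf conditions are exactly conditions (a)--(d), so $F$ is a leaf.

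For refutation, every decomposition of length $22$ has a set of first factors $F$ that is admissible, as noted after Definition~\ref{def:adm}. So is every image $gF$. One image $gF$ is canonical, and $g$ maps decompositions to decompositions by \eqref{eq:sym}. It therefore suffices to show that no leaf $F$ is the set of first factors of a decomposition. For each leaf I would run the completion search with target $T$: assign to the $22$ fixed first factors unknown second and third factors $(b_a,c_a)$ and require $\sum_{a\in F}a\otimes b_a\otimes c_a=T$. A natural way to organize the search is to contract with linear forms $\varphi$. This gives $T(\varphi)=\sum_a\varphi(a)\,b_ac_a^{\mathsf T}$, a sum of at most $22$ rank-one $9\times 9$ matrices. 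The search branches on the $b_a$ (there are $511$ choices each). After the $b$'s are fixed, the equations are linear in the $c_a$, so each branch ends in a linear-algebra infeasibility check. Pruning comes from rank constraints on partial residuals, using the inequalities \eqref{eq:excess} and Lemma~\ref{lem:split} for each contraction. Each refutation would emit a certificate: a branching tree whose leaves carry inconsistent linear systems. These certificates are checked independently, as in Section~\ref{sec:verify}.

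The main obstacle is the refutation step. There are about $7.5\cdot 10^7$ leaves, and each completion is an exponential search over pairs of $9$-dimensional factors. To keep this feasible I would first exploit the stabilizer of $F$ in $G$ to break symmetry, and order the branching by first factors of high rank, whose slices are most constrained. Rank-one first factors would be handled last: for them $a\otimes b\otimes c$ contributes a rank-one slice in several contractions simultaneously, so once the other terms are fixed the residual tensor determines them almost uniquely. Finally, the correctness of the whole argument rests on the soundness of the search implementation. That is why the certificate format must make each pruning step locally checkable.
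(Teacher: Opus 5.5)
Your proposal is essentially the paper's argument. Completeness follows from Lemma~\ref{lem:canon} together with the fact that each discarding rule (i)--(vii) tests only a necessary condition for a canonical admissible completion, which $F\setminus F_{m+1}$ witnesses; refutation is the exhaustive completion search of Section~\ref{sec:tsolve} with target $T$, run on every leaf and checked through certificates.

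The only divergence is in how that search is organized. The paper assigns the pairs $(b_t,c_t)$ jointly, one term at a time, so that the residuals $\mathrm{Res}(\varphi)$ are defined at every node; with only the $b$'s fixed, as in your plan, they are not. It prunes by $\rk\mathrm{Res}(\varphi)\le u(\varphi)$ and the tight-form restrictions, rather than by \eqref{eq:excess} or Lemma~\ref{lem:split}. It breaks symmetry with $b\mapsto bS^{-1}$, $c\mapsto Sc$, which fixes every first factor, rather than with the stabilizer of $F$ in $G$.
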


\begin{proof}[Proof of Theorem~\ref{thm:main}]
Suppose $T$ has a decomposition of length $22$. By Corollary~\ref{cor:distinct} its first factors form a set $F$ of size $22$, admissible by the remark after Definition~\ref{def:adm}. By Lemma~\ref{lem:canon} some $gF$ with $g\in G$ is canonical, and $gF$ is the set of first factors of a decomposition because $g$ extends to a symmetry of $T$. This contradicts Proposition~\ref{prop:search}. Hence $R_{\F}(\mm)\ge 23$, and Laderman's algorithm gives equality.
\end{proof}

\section{The completion search}\label{sec:tsolve}

Given nonzero first factors $a_1,\dots,a_r$ and a target tensor $T'$ (either $T$ or a quotient $T_S$, written in suitable coordinates), the completion search decides whether there are nonzero $b_t,c_t$ with $\sum_ta_t\otimes b_t\otimes c_t=T'$. For a linear form $\varphi$ on the space of first factors let $T'(\varphi)$ be the contraction, a $9\times 9$ matrix.

The search assigns the pairs $(b_t,c_t)$ one term at a time. For a partial assignment and each nonzero $\varphi$ it keeps the residual
\[
\mathrm{Res}(\varphi)=T'(\varphi)-\sum_{t\text{ assigned},\,\varphi(a_t)=1}b_tc_t^{\mathsf T}
\]
and the number $u(\varphi)$ of unassigned terms with $\varphi(a_t)=1$. A completion exists only if $\rk\mathrm{Res}(\varphi)\le u(\varphi)$ for all $\varphi$, since the unassigned terms with $\varphi(a_t)=1$ decompose $\mathrm{Res}(\varphi)$. When all terms are assigned, the decomposition identity holds exactly when $\mathrm{Res}(\varphi)=0$ for all $\varphi$.

The candidates for the next term $t$ are restricted by necessary conditions only, so the search is exhaustive.
\begin{itemize}
\item If $\varphi(a_t)=1$ and $\varphi$ is tight, that is $\rk\mathrm{Res}(\varphi)=u(\varphi)$, then the unassigned terms of $\varphi$ give a decomposition of $\mathrm{Res}(\varphi)$ into $u(\varphi)$ matrices of rank one. Their vectors $b$ form a basis of the column space and their vectors $c$ a basis of the row space. Hence $b_t$ lies in the column space of $\mathrm{Res}(\varphi)$, and $c_t$ lies in the row space and satisfies $c_t^{\mathsf T}x=1$ for every $x$ with $\mathrm{Res}(\varphi)x=b_t$.
\item If $\varphi(a_t)=1$ and $\rk\mathrm{Res}(\varphi)=u(\varphi)-1$, then $b_t$ lies in the column space or $c_t$ lies in the row space of $\mathrm{Res}(\varphi)$.
\end{itemize}
These rules are dynamic versions of the restriction lemmas of \cite[Lemmas~2 and~3]{DAm26} and \cite[Lemma~4.1]{Wan26b}.
The maps $b\mapsto bS^{-1}$, $c\mapsto Sc$ with $S\in\GL$ fix all first factors, $T$ and the quotient targets we use. We may therefore restrict the first term to one representative $b$ of each orbit, and its third factor $c$ to one representative of each orbit of the stabilizer of $b$.

\section{Verification with compiled checks}\label{sec:verify}

\subsection{The formal statement}
The final theorem states that $n\ge 23$ whenever maps $a$, $b$ and $c$ from $\{1,\dots,n\}$ to $\F^{3\times 3}$ satisfy $\sum_s a_s\otimes b_s\otimes c_s=T$. The factors are arbitrary matrices, $\F$ is the field \texttt{ZMod 2} of Mathlib, and the definitions of the matrices, of $T$ and of a decomposition are the elementary ones, written out in a short file. Further theorems show that the statement does not change when $T$ is replaced by the tensor of the bilinear map $(A,B)\mapsto AB$, and that Laderman's algorithm reduced modulo $2$ is a decomposition of length $23$, so that $R_{\F}(\mm)=23$ is a theorem in Lean. Rudich and Rousseau formalized the reduction from algorithms with integer constants that can be applied to blocks \cite{RR26}. Through their formalization the final theorem gives that such an algorithm for $3\times 3$ matrices uses at least $23$ multiplications, which is their result with $23$ in place of $22$.

\subsection{Two layers}
The first layer consists of theorems proved in Lean~4 \cite{dMU21} with Mathlib \cite{mC20}, using only the axioms \texttt{propext}, \texttt{Classical.choice} and \texttt{Quot.sound}. For each kind of certificate it defines a checker, a function with Boolean values, and proves that the checker is sound: if it returns \texttt{true} on some data, the statement that the data certify holds. There are checkers for the nodes of the search at $r=22$ and the reasons for discarding their children, for canonicity, for the enumerations of coset profiles, for the refutation of profiles by the completion search, and for the type exclusions of Section~\ref{sec:types}. The same layer contains the arguments of Sections~\ref{sec:prelim}--\ref{sec:search}: Lemma~\ref{lem:gram} and its use in Proposition~\ref{prop:types}, Lemma~\ref{lem:S} for decompositions of length $22$, the implication (a)$\Rightarrow$(b) of Lemma~\ref{lem:lift} for the line $L_0$ and the deduction of Theorem~\ref{thm:l479} from its enumeration, the capacity table from the bounds of \cite{RR26}, the $23$ bounds of Table~\ref{tab:raise} that follow by monotonicity, and the deduction of Theorem~\ref{thm:main} from Proposition~\ref{prop:search}. Lemma~\ref{lem:S3} is not formalized; instead, the type $(19,0,3)$ is excluded by the computation behind Proposition~\ref{prop:types}, with $\Delta=1$ and the seven anchor orbits. The formal proof thus excludes six types by computation and two, $(18,3,1)$ and $(19,1,2)$, by Lemma~\ref{lem:S}.

The second layer evaluates the checkers on the certificates. We check the certificates as the search programs produce them and, apart from small samples, discard them once checked, since at about $13$ terabytes in total they are too large to keep. The kernel evaluates the checks for the type exclusions of Proposition~\ref{prop:types}, for the enumerations of $64$ classes of Table~\ref{tab:raise}, for the top levels of all other enumerations, and for the branch of the search in cases (I) and (I$'$) whose sets begin with a matrix of rank one, which contains no set with $k\le 13$. The other checks are evaluated as compiled code: those for the rest of the search at $r=22$ and, below their top levels, those for the enumerations of the point, of $23$ further classes and of Section~\ref{sec:l479}. The bounds of these $24$ classes therefore depend on compiled evaluations, and so, through monotonicity, do seven further bounds of Table~\ref{tab:raise}. In the enumeration of Section~\ref{sec:l479} the compiled checks leave out the $10$ profiles that admit decompositions of $T_{L_0}$, and for each of them a kernel proof shows that none of these decompositions lifts.

A compiled evaluation is a call of \texttt{native\_decide}. It runs the compiled checker on the data and adds to the environment an axiom stating that the checker returns \texttt{true} on these data. There is one such call for each of the $4{,}381{,}024$ batches of certificates and for each of $23$ further evaluations in the enumerations. The batches are combined through Lean's module system. Each batch is checked in its own module. Its data and its axiom are private to that module, which exports only a statement about the interface of the batch: the part of the search it covers and the subtrees it leaves to other batches. Further modules import these statements and prove in the kernel that the batches together cover the whole search and all enumerations. A theorem of the first layer turns this covering statement into Theorem~\ref{thm:main}. For this purpose the first layer is written in module form, and each of its more than two million constants that also occur in the non-module development has the same statement and the same axioms there. The covering statement and the final theorem are therefore in one environment, and \texttt{\#print axioms} for the final theorem lists the three standard axioms and the $4{,}381{,}047$ axioms added by these calls.

Each of these axioms is tied by name to a batch and to the checker that the batch evaluates. The checker is recorded with each batch, except for $187{,}578$ batches checked before such records were kept. For those it is determined by the kernel-checked statement that the batch exports, because only one soundness theorem in the build concludes that statement. For the $23$ evaluations outside the batches we checked directly that each axiom states that the intended checker returns \texttt{true}. The search programs and the programs that distribute the batches over the machines are not trusted: a wrong or missing batch makes a check or a covering proof fail.

\subsection{What the proof relies on}
The proof relies on the Lean kernel and, in addition, on the Lean compiler and runtime, the C compiler that builds the compiled checkers, and the arithmetic library they use, because \texttt{native\_decide} accepts the result of a compiled evaluation without a kernel check. For this reason Mathlib does not accept proofs that use \texttt{native\_decide}. This basis is weaker than that of the proofs by Yang et al.\ \cite{Yan26} and by Rudich and Rousseau \cite{RR26}, which use only the kernel. An independent kernel replay, as carried out in \cite{RR26}, re-checks the first layer and the covering proofs but not the evaluations. Checking a large finite computation by compiled evaluation of a checker proved sound is the method of the Coq proof of the four colour theorem \cite{Gon08} and of the enumeration of tame graphs in the formal proof of the Kepler conjecture \cite{Hal17}.

A second point concerns the library of compiled code. The compiled checkers are found by name, and Lean does not check that this library was built from the definitions to which the statement of a batch refers. With a library left over from an earlier build, a false evaluation would become an axiom that looks like any other. With a toy checker whose library had been compiled from an earlier version of its definition, we did obtain in this way a proof of \texttt{False} that Lean accepted. We therefore build each library into a directory named by a hash of its contents. Each batch records the SHA-256 hashes of the library and of the compiled checker modules that it loaded, and the script that generates the combining modules accepts a batch only if these hashes are those of the build used for the final theorem.

As a further check, we selected about one percent of the batches of each kind by a hash of their names and evaluated them again with Lean's interpreter, which runs the compiled code stored in the checker modules and does not use the library. Because the interpreter is $45$ to $200$ times slower than compiled code, we re-checked $382$ of the $744$ selected batches, mostly the smaller ones. Each of them reproduced the module of its batch byte for byte (Table~\ref{tab:checks}). This check does not depend on the library or on the C compiler, but it still depends on Lean's compiler, which produced the code that the interpreter runs. Section~\ref{sec:kernel} describes a second formal proof, in which every check is evaluated by the kernel.

\subsection{The computation}
Compiled, the checkers handle a node of the search at $r=22$ in about $1$--$2$~ms, a node of an enumeration in $0.004$--$0.03$~ms and a leaf in about $0.02$~ms. This is between one hundred and one thousand times faster than evaluation by the kernel. The computation used Lean v4.30.0 and about $478{,}000$ vCPU-hours of machine time.

\subsection{Further checks}
Table~\ref{tab:checks} lists checks that are not part of the proof. Their programs and outputs are in the accompanying data.

\begin{table}[htbp]
\caption{Independent checks. In the last two checks, a package is a group of Lean modules of the proof of Section~\ref{sec:kernel}, and the private part of a module consists of its certificates, which are private declarations (Section~\ref{subsec:otherchecks}).}\label{tab:checks}
\centering
\small
\begin{tabular}{@{}p{0.44\textwidth}p{0.53\textwidth}@{}}
\toprule
check & result\\
\midrule
The completion search on the first factors of a known decomposition of length $23$, with target $T$, and on its quotients by $12$ subspaces of $A$ & a completion is found in all $13$ cases\\
Six decompositions of length $23$ (listed in the data): the system \eqref{eq:excess} and its analogues for the second and third factors, $18$ cases in all & satisfied, with $\Delta\in\{1,2,3\}$\\
The $62{,}239$ decompositions of length $23$ of Section~\ref{subsec:upper}, each factor in the role of the first factor: the system \eqref{eq:excess} and Lemmas~\ref{lem:split}, \ref{lem:S} and~\ref{lem:S3}, $186{,}717$ cases & satisfied in all cases, with $\Delta$ from $0$ to $11$\\
The same $186{,}717$ cases against the bounds of \cite{RR26} and Table~\ref{tab:raise}: at most $23-\lb(S)$ first factors in each subspace $S$ of $A$ (Lemma~\ref{lem:cap}) & satisfied for all $8{,}283{,}458$ subspaces\\
Re-evaluation by Lean's interpreter, without the library, of $382$ of the batches selected by a hash of their names (Section~\ref{sec:verify}), namely $240$ of the $602$ selected for case (II), the one selected for each of cases (I) and (I$'$) and all $140$ selected for the enumerations, and of all $89$ parts of the search kept as a sample of about one in a thousand & identical modules in every case\\
Replay by Lean's checker of a random sample of $0.1\%$ of the private parts of the proof of Section~\ref{sec:kernel}, drawn from the enumerations for the point, the classes and the external leaves, which have $9.1$ million modules & none of the sampled parts fails\\
\leftskip=1em all $9{,}538$ sampled parts & $5{,}914$ replay, $3{,}258$ cannot be replayed because a private part they import was not kept, and $366$ lie in packages that the final proof does not use\\
\leftskip=1em the $9{,}165$ sampled parts in packages of the final proof & $5{,}912$ replay and $3{,}253$ cannot be replayed\\
Regeneration, with all private parts kept, of $24$ packages ($6{,}433$ modules) chosen at random in advance from the search at $r=22$ in the same proof, which has $5.0$ million modules: $20$ of the $18{,}110$ packages of case (II), the package of case (I) and $3$ of the $80$ packages of case (I$'$) & all $24$ packages export the statement of the original build\\
\leftskip=1em $6$ packages ($503$ modules) & a byte-identical replay of the original modules\\
\leftskip=1em $5$ packages ($576$ modules) & the same statement re-derived independently, with every module replayed by Lean's checker\\
\leftskip=1em the other $13$ packages ($5{,}354$ modules) & the same statement re-derived independently, with the combining modules replayed at the exported level\\
\leftskip=1em the $6{,}433$ regenerated modules & $6{,}390$ replay by Lean's checker and none fails; the other $43$, which combine the results of a package, replay in the kernel at the level of exported statements\\
\bottomrule
\end{tabular}
\end{table}

\subsection{Data and code}
The programs, the tables, the certificate formats, the Lean development and a list of all batches of the computation with their hashes are available at \url{https://github.com/tejasvi/rank333-f2} and archived at \url{https://doi.org/10.5281/zenodo.23236154}. The kernel-only proof of Section~\ref{sec:kernel}, with the generators and scripts that regenerate its modules, is at \url{https://github.com/tejasvi/rank333-f2-kernel} and archived at \url{https://doi.org/10.5281/zenodo.23304899}.

\section{Verification by the kernel}\label{sec:kernel}

The second formal proof evaluates every check in the Lean kernel. Its final theorem, \texttt{R22K.Final.rank\_ge\_23}, has the statement of Section~\ref{sec:verify}, and \texttt{\#print axioms} lists only \texttt{propext}, \texttt{Classical.choice} and \texttt{Quot.sound}. The same module proves the consequences stated in Section~\ref{sec:verify}, with the same axioms. The proof follows Sections~\ref{sec:prelim}--\ref{sec:tsolve} except in case (II), where the enumeration behind Theorem~\ref{thm:l479}, with $7.4\cdot 10^{11}$ nodes, is too large for the kernel. We replace it by Lemma~\ref{lem:W}, which has a short proof, and by Theorem~\ref{thm:planes}, whose enumerations are together ten times smaller.

\subsection{Two caps for case (II)}
The first lemma bounds the number of first factors in a subspace of dimension six.

\begin{lemma}\label{lem:W}
Let $W=\{a\in A: u^{\mathsf T}a=0\}$ or $W=\{a\in A: av=0\}$ with $u,v\in\F^3$ nonzero. In a decomposition of length at most $22$, at most $11$ first factors lie in $W$.
\end{lemma}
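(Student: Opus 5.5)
The statement is invariant under $G$: the maps $a\mapsto PaQ$ send $\{u^{\mathsf T}a=0\}$ to $\{(P^{-\mathsf T}u)^{\mathsf T}a=0\}$, and transposition exchanges the two kinds of $W$. So I would reduce to $u=e_1$, where $W$ is the space of matrices with zero first row. The claim then says: in a decomposition of length $r\le 22$, at least $r-11$ terms have a first factor with nonzero first row. For $r=22$ that is at least $11$ such terms. This is the form I would aim to prove.

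The first thing to try is Lemma~\ref{lem:cap}, but it is not enough on its own. The forms vanishing on $W$ are those supported on row $1$. So $T_W=\sum_{j,k}\bar e_{1j}\otimes e_{jk}\otimes e_{k1}$ is a copy of $\langle1,3,3\rangle$, of rank $9$, and Lemma~\ref{lem:cap} only gives $|\{s:a_s\in W\}|\le r-9$. Likewise, the row-$1$ block of the split flattening $P$ is a $9\times27$ block of rank $9$. Each term with $a_t\notin W$ contributes at most $\rk(\text{row }1\text{ of }a_t)=1$ to it, which again gives only $9$. The missing $2$ must therefore come from the interaction of the row-$1$ part $\langle1,3,3\rangle$ with the rows-$2,3$ part, a copy of $\langle2,3,3\rangle$.

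My plan is a substitution step combined with a count on the complementary part. Let $O=\{s:a_s\notin W\}$ and suppose $|O|\le r-12$, hence $|O|\le 10$ for $r=22$. The row-$1$ parts of the $a_s$ with $s\in O$ must realise $\langle1,3,3\rangle$. This forces their second and third factors to be spread out: the $c_s$ restricted to column $1$ must span $\F^3$, and the $b_s$ must span all of $B$. Because $|O|\le 10$ is barely above $9$, most of these terms are tight in the sense of the restriction lemmas in Section~\ref{sec:tsolve}. With at most one or two exceptions, the $b_s$ are determined up to a basis of each row block, and the row-$1$ contraction is a direct sum.

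Next I would apply a symmetry $(P,Q,R)$ fixing $u=e_1$ to normalise these factors. Then I would pass to the quotient of $C$ (or of $B$) by a subspace killing the $c_s$ for $s\in O$ as far as possible, in the style of Hopcroft--Kerr. What remains must be computed by the $\ge 12$ terms with $a_s\in W$, whose first factors have zero first row. After the substitution, the residual target should still contain a copy of $\langle2,3,3\rangle$ minus at most a few rank-one corrections. Its rank must exceed what $r-|O|$ terms can provide once the constraint is used that all of them vanish on row $1$. Here I would use $R(\langle2,3,3\rangle)\ge 14$, or more weakly Lemma~\ref{lem:split} applied to the $18\times18$ flattening of the rows-$2,3$ part, where each term in $W$ contributes $\rk a_s\le 2$.

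The main obstacle is the exact accounting in this last step: making the $9+2$ bookkeeping close, i.e.\ showing that saving terms on the row-$1$ part costs at least as many terms on the $\langle2,3,3\rangle$ part. If the hand count does not close, I would fall back on a finite computation. I would enumerate the coset profiles of the decomposition of $T_W$ with at most $r-12$ terms, as in Section~\ref{subsec:profiles}. Then I would refute each profile with the completion search and the lifting test of Lemma~\ref{lem:lift} adapted to a $6$-dimensional subspace. This should be small, since $A/W$ has only $7$ nonzero cosets.
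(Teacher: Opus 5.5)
Your plan has the right shape: kill the terms outside $W$ by a quotient of the third factor, then compare what is left, a copy of $\langle2,3,3\rangle$, with the number of terms in $W$. But, as you acknowledge, it is missing both facts that make the count close.

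The first missing fact is a bound on how far the third factors $c_s$, $s\in O$, stick out of $C'=\langle e_{11},e_{21},e_{31}\rangle$. This $C'$ is the space of third factors of the row-$1$ part $\sum_{j,k}\bar e_{1j}\otimes e_{jk}\otimes e_{k1}$. Tightness and the column-$1$ entries of the $c_s$ do not give this bound; what matters is the component of $c_s$ modulo $C'$. Read both sides of $\sum_{s\in O}\pi(a_s)\otimes b_s\otimes c_s=\sum_{j,k}\pi(e_{1j})\otimes e_{jk}\otimes e_{k1}$ as maps $B^*\to(A/W)\otimes C$. The right side is injective with image $(A/W)\otimes C'$. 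The left side factors through $N\colon\beta\mapsto(\beta(b_s))_{s\in O}$, so $N$ is injective; this is your remark that the $b_s$ span $B$. Let $\rho\colon C\to C/C'$. The map $e_s\mapsto\pi(a_s)\otimes\rho(c_s)$ vanishes on the $9$-dimensional image of $N$, so these tensors span at most $|O|-9$ dimensions. Since $\pi(a_s)\neq0$, the same holds for the $\rho(c_s)$. Hence all $c_s$ with $s\in O$ lie in one subspace $D\supseteq C'$ with $\dim D\le|O|-6$, and the terms in $W$ alone decompose $T$ modulo $D$ in the third factor.

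The second missing fact is a lower bound for that quotient. For $|O|=9$ you have $D=C'$, and $R(\langle2,3,3\rangle)\ge14>13$ suffices. For $|O|=10$, however, $\dim D=4$ and at most $12$ terms lie in $W$, so you need rank at least $13$ modulo $D$. Quotienting by one more vector of $C$ is not a rank-one correction, and nothing in your argument bounds the drop by $1$; the elementary bound is the rank of a $6\times 9$ matrix. The split flattening of the rows-$2,3$ part gives only $9$. The paper gets the bound from the table of \cite{RR26}. By the cyclic symmetry of $T$, the rank of $T$ modulo $D$ in the third factor equals $R(D')$ for the subspace $D'\subseteq A$ with the same matrices. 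This $D'$ has dimension at most $4$ and contains all matrices supported on one column, and every such subspace has $R(D')\ge14$.

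Your computational fallback does not replace this. Since $T_W\cong\langle1,3,3\rangle$ has rank $9$, the completion search cannot refute all profiles with $9$ or $10$ terms. Lifting across the $6$-dimensional $W$ is also not the two-rank-one test of Lemma~\ref{lem:lift}. It asks whether a tensor in $W\otimes B\otimes C$ has rank at most $12$ for one of $2^{6|O|}$ choices of representatives, which is essentially the original problem.
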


\begin{proof}
The transposition symmetry of Section~\ref{sec:prelim} exchanges the two kinds of subspaces, and the symmetries \eqref{eq:sym} act transitively on each kind. So we may assume that $W=W_0$ is the space of matrices with zero third row. Let $O$ be the set of terms with $a_t\notin W_0$ and $I$ the set of the others. Let $\pi\colon A\to A/W_0$ be the projection, which records the third row. Then
\[
\sum_{t\in O}\pi(a_t)\otimes b_t\otimes c_t=(\pi\otimes 1\otimes 1)(T)=\sum_{j,k}\pi(e_{3j})\otimes e_{jk}\otimes e_{k3}.
\]
Read as a linear map from $B^*$ to $(A/W_0)\otimes C$, the right side is injective with image $(A/W_0)\otimes C'$, where $C'$ is spanned by $e_{13},e_{23},e_{33}$. The left side factors through the map $N\colon B^*\to\F^{O}$, $\beta\mapsto(\beta(b_t))_t$, so $N$ is injective, $|O|\ge 9$, and the map $e_t\mapsto\pi(a_t)\otimes c_t$ sends the image of $N$, of dimension $9$, onto $(A/W_0)\otimes C'$. With $\rho\colon C\to C/C'$, the tensors $\pi(a_t)\otimes\rho(c_t)$ for $t\in O$ therefore span a space of dimension at most $s=|O|-9$. Each of them has rank at most one, and $\pi(a_t)\ne 0$, so all $\rho(c_t)$ lie in the span of at most $s$ of the vectors $\rho(c_t)$. Hence the third factors $c_t$ with $t\in O$ lie in a subspace $D\supseteq C'$ of $C$ of dimension at most $3+s$. Projecting the third factor to $C/D$ removes the terms of $O$, so the rank of $T$ modulo $D$ in the third factor is at most $|I|$. Since $T$ is invariant under the cyclic permutation of its three factors, this rank equals $R(D')$, where $D'\subseteq A$ has the same matrices as $D$. If $s\le 1$, then $D'$ is a subspace of dimension at most $4$ containing the space of matrices supported on the third column, and by \cite{RR26} each such subspace has $R(D')\ge 14$. This contradicts $R(D')\le|I|\le 22-|O|=13-s$. So $s\ge 2$ and $|I|\le 11$.
\end{proof}

Let
\[
E_{436}=\langle e_{11},\ e_{12}+e_{23},\ e_{22}+e_{31}\rangle\quad\text{and}\quad E_{422}=\langle e_{11},\ e_{13}+e_{22},\ e_{12}+e_{31}\rangle .
\]
In each of these planes the nonzero matrices have ranks $1,2,2,2,2,3,3$. The $G$-orbit of $E_{436}$ has $14{,}112$ planes and that of $E_{422}$ has $7{,}056$. Their stabilizers have orders $4$ and $8$, respectively. The table gives $R\ge 19$ for both, so Lemma~\ref{lem:cap} allows three first factors in such a plane.

\begin{theorem}[$\dagger$]\label{thm:planes}
In a decomposition of length $22$ with at least $14$ first factors of rank one, each plane in the $G$-orbit of $E_{436}$ or of $E_{422}$ contains at most two first factors.
\end{theorem}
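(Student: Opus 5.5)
By symmetry it suffices to treat $E=E_{436}$ and $E=E_{422}$ themselves. The argument follows the proof of Theorem~\ref{thm:l479}, with a plane in place of the line $L_0$. Suppose three first factors lie in $E$. By Corollary~\ref{cor:distinct} they are distinct, and since the capacity of $E$ is $3$, no further first factor lies in $E$. The remaining $19$ terms therefore project to a decomposition of $T_E$ of length $19$ with nonzero first factors, that is, nonzero cosets of $E$. Its coset profile $n$ satisfies \eqref{eq:coset} with $\ell-1=19$ for every superspace $S'\supseteq E$, using the table of \cite{RR26} improved by Table~\ref{tab:raise}.

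Each of $E_{436}$ and $E_{422}$ contains only one matrix of rank one. Hence at most one of the three first factors in $E$ has rank one, and at least $13$ of the $19$ remaining first factors have rank one. This gives the extra inequality $\sum_{x\in C_1}n(x)\ge 13$, where $C_1$ is the set of nonzero cosets of $E$ that contain a matrix of rank one.

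We would add further valid constraints to prune the enumeration. Lemma~\ref{lem:W} says that every six-dimensional $W\supseteq E$ of the stated kind contains at most $11$ first factors, which bounds the sum of $n$ over $W/E$ by $8$. If it can be formalized cheaply, we would also use the type restrictions of Corollary~\ref{cor:types}, which constrain the rank-two and rank-three counts among the cosets. We then enumerate these profiles up to the stabilizer of $E$, which has order $4$ or $8$, by the orderly branch and bound of Section~\ref{subsec:profiles}, and run the completion search of Section~\ref{sec:tsolve} with target $T_E$ on each profile.

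For profiles that do admit decompositions of $T_E$, we need a plane analogue of Lemma~\ref{lem:lift}. Write the residual as $T-\sum_t a_t\otimes b_t\otimes c_t=\sum_{i=1}^3 p_i\otimes M_i$ for a basis $p_1,p_2,p_3$ of $E$. The contractions with the seven nonzero forms $\chi$ on $E$, extended to $A$, give affine sets $V_\chi$ as before. The residual is a sum of three rank-one terms with distinct first factors in $E$ only if, for some choice of three of the seven elements of $E$, the corresponding matrices have rank one. Unlike the line case, the bit families are no longer independent in pairs of three, so we would list all decompositions found and test the lifts directly by solving for the representatives $a_t\in\bar a_t$. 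The test is to ask whether $V_\chi$ contains a matrix of rank $\le 1$ for the relevant $\chi$, as a necessary condition, and then to run an exact finite check over the $8^{19}$-structured choices, reduced by linearity to small linear systems.

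The main obstacle is the size of the enumeration. The quotient $A/E$ has $63$ nonzero cosets, but the profile polytope is loose, since only $13$ of the $19$ terms are forced into $C_1$. The stabilizers are also small, so symmetry reduction gains only a factor of $4$ or $8$. We would first try to tighten the pruning with the capacities of superspaces of $E$ of dimension $4$ to $6$ (including Lemma~\ref{lem:W}), and hope that the total stays an order of magnitude below the enumeration for $L_0$.
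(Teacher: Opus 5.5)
Your outline is the paper's proof. It has three distinct first factors in $E$ (capacity $3$, since $R(E)\ge 19$) and a decomposition of $T_E$ of length $19$. Its profile on the $63$ nonzero cosets satisfies \eqref{eq:coset} and $\sum_{x\in C_1}n(x)\ge 13$; the paper writes the equivalent $\sum_{x\notin C_1}n(x)\le 6$. The profiles are enumerated up to the stabilizer of $E$, the completion search is run on each leaf, and a lifting test is applied to the leaves that admit decompositions of $T_E$.

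The difference is the last step. The paper does not decide lifting exactly. Lemma~\ref{lem:liftplane} refutes each triple $P\subset E$ by a single form $\varphi$ for which every matrix of $W_\varphi=T(\varphi)+\operatorname{span}\{b_tc_t^{\mathsf T}\}$ has rank larger than $\#\{x\in P:\varphi(x)=1\}$. This is certified by vectors $w_i,z_\lambda$, and the certificate splits over the rank-one term at one element of $P$ when no single form works. In addition, Lemma~\ref{lem:eqf} divides the number of decompositions to be tested by the order of the group of exchanges of the pairs $(b_t,c_t)$ within a coset. The paper's route gives short certificates that Lean can check and keeps the step manageable; even with the reduction, $3.4\cdot 10^{8}$ assignments are tested for $E_{436}$. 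Your exact test needs no case splits, but without the exchange reduction it must handle many more decompositions.

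Your account of the exact test needs correcting. For the $28$ triples that form a basis of $E$, the bit families of the three dual forms $\psi_1,\psi_2,\psi_3$ \emph{are} independent. So a lift through such a $P$ exists if and only if each $V_{\psi_i}$ contains a rank-one matrix, exactly as in Lemma~\ref{lem:lift}.

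The coupling comes only from the $7$ collinear triples $\{x,y,x+y\}$. Let $\psi_0$ be the nonzero form on $E$ vanishing at $x$ and $y$, and let $\chi_0,\chi_1$ be forms with $\chi_0(x)=\chi_1(y)=1$ and $\chi_0(y)=\chi_1(x)=0$. A lift through the triple exists if and only if $0\in V_{\psi_0}$ and some rank-one $N$ (the term at $x+y$) makes both $V_{\chi_0}+N$ and $V_{\chi_1}+N$ contain rank-one matrices. The relevant contractions there are sums of two rank-one terms, so a filter asking for rank at most one would discard genuine lifts.

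Finally, the pruning by Lemma~\ref{lem:W} is empty here. No subspace $\{u^{\mathsf T}a=0\}$ or $\{av=0\}$ contains $E_{436}$ or $E_{422}$: for $E_{436}$, $u^{\mathsf T}e_{11}=u^{\mathsf T}(e_{12}+e_{23})=u^{\mathsf T}(e_{22}+e_{31})=0$ forces $u=0$, and the other cases are similar. Corollary~\ref{cor:types} constrains profiles only weakly, because a coset does not fix the rank of its representative. The paper uses neither in this proof.
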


The proof uses a lifting test and a symmetry reduction of the completion search.

\begin{lemma}\label{lem:liftplane}
Let $E$ be a plane and $T_E=\sum_{t=1}^{19}\bar a_t\otimes b_t\otimes c_t$ a decomposition with nonzero factors. For a linear form $\varphi$ on $A$ let $W_\varphi=T(\varphi)+\operatorname{span}\{b_tc_t^{\mathsf T}:1\le t\le 19\}$. Suppose that for each set $P$ of three nonzero elements of $E$ there is a form $\varphi$ such that every matrix in $W_\varphi$ has rank greater than the number of $x\in P$ with $\varphi(x)=1$. Then there are no $a_t\in\bar a_t$ and three terms with distinct first factors in $E$ that complete the $19$ terms to a decomposition of $T$.
\end{lemma}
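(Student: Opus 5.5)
We prove the contrapositive, in the style of the proof of Lemma~\ref{lem:lift}. Suppose there are representatives $a_t\in\bar a_t$ and three terms $x_i\otimes b'_i\otimes c'_i$, $i=1,2,3$, with distinct nonzero first factors $x_1,x_2,x_3\in E$, such that
\[
T=\sum_{t=1}^{19}a_t\otimes b_t\otimes c_t+\sum_{i=1}^{3}x_i\otimes b'_i\otimes c'_i .
\]
Let $P=\{x_1,x_2,x_3\}$. This is a set of three nonzero elements of $E$. By hypothesis there is a linear form $\varphi$ such that every matrix in $W_\varphi$ has rank greater than $m=\#\{i:\varphi(x_i)=1\}$. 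We will derive a contradiction by producing a matrix in $W_\varphi$ of rank at most $m$.

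Contract the displayed identity with $\varphi$. Contraction is linear, and a single term $a\otimes b\otimes c$ contracts to $\varphi(a)\,bc^{\mathsf T}$, as in Section~\ref{sec:prelim}. This gives
\[
T(\varphi)=\sum_{t=1}^{19}\varphi(a_t)\,b_tc_t^{\mathsf T}+\sum_{i:\varphi(x_i)=1}b'_i{c'_i}^{\mathsf T}.
\]
Over $\F$ subtraction equals addition, so
\[
T(\varphi)+\sum_{t}\varphi(a_t)\,b_tc_t^{\mathsf T}=\sum_{i:\varphi(x_i)=1}b'_i{c'_i}^{\mathsf T}.
\]
The left side lies in $T(\varphi)+\operatorname{span}\{b_tc_t^{\mathsf T}\}=W_\varphi$, whatever the bits $\varphi(a_t)$ are. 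The right side is a sum of $m$ matrices of rank one, so its rank is at most $m$. This is the required matrix.

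The point to notice is that $W_\varphi$ is defined with the full span rather than with the specific coefficients $\varphi(a_t)$. The bits $\varphi(a_t)$ depend on the unknown lift $a_t\in\bar a_t$ when $\varphi$ does not vanish on $E$. Using the span removes this dependence, so the test is valid for every lift at once. The only other facts used are the linearity of contraction and the subadditivity of rank. Distinctness of the $x_i$ is needed only so that $P$ is a $3$-set and the hypothesis applies. I expect no real obstacle. The one thing to check with care is the bookkeeping of signs and the index order in $bc^{\mathsf T}$, which should match the contraction convention of Section~\ref{sec:prelim}.
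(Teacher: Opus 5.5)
Your proof is correct and is the paper's own argument. You contract the putative decomposition of $T$ with $\varphi$ and note that $T(\varphi)+\sum_t\varphi(a_t)b_tc_t^{\mathsf T}$ lies in $W_\varphi$ for every choice of lifts. You then observe that it equals a sum of at most as many rank-one matrices as there are $x\in P$ with $\varphi(x)=1$, which contradicts the choice of $\varphi$.
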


\begin{proof}
Suppose such terms exist, with first factors in a set $P$. Contracting the decomposition of $T$ with $\varphi$ gives $T(\varphi)+\sum_t\varphi(a_t)b_tc_t^{\mathsf T}=\sum_{x\in P,\,\varphi(x)=1}u_xv_x^{\mathsf T}$ for some vectors $u_x,v_x$. The left side lies in $W_\varphi$, and the right side has rank at most the number of $x\in P$ with $\varphi(x)=1$, contrary to the choice of $\varphi$.
\end{proof}

Unlike Lemma~\ref{lem:lift}, Lemma~\ref{lem:liftplane} gives only a sufficient condition. Its hypothesis is certified by vectors $w_1,\dots,w_r$ and, for each nonzero $\lambda\in\F^r$, a vector $z_\lambda$ with $w_\lambda^{\mathsf T}T(\varphi)z_\lambda=1$ and $(w_\lambda^{\mathsf T}b_t)(c_t^{\mathsf T}z_\lambda)=0$ for all $t$, where $w_\lambda=\sum_i\lambda_iw_i$. Then $w_\lambda^{\mathsf T}Mz_\lambda=1$ for every $M\in W_\varphi$, so the rows $w_i^{\mathsf T}M$ are independent and $\rk M\ge r$. Where no single form suffices for a set $P$, the certificate fixes one element of $P$ and splits over the possible terms $uv^{\mathsf T}$ at that element, refuting each case in the same way.

The completion search on these leaves uses one more reduction. Two of the $19$ terms whose first factors lie in the same coset of $E$ can exchange their pairs $(b_t,c_t)$, and the result is again a decomposition of $T_E$ that lifts if and only if the original one does. Call a decomposition \emph{canonical} if within each such group of terms, apart from the group of the term on which the search branches first, the integers $512\,b_t+c_t$ do not decrease with $t$ (here $b_t$ and $c_t$ are read as $9$-bit integers).

\begin{lemma}\label{lem:eqf}
Each decomposition of $T_E$ becomes canonical after exchanges within the groups, and the exchanges do not change whether it lifts.
\end{lemma}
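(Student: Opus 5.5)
The lemma has two claims: a canonical form exists, and exchanging pairs preserves whether the decomposition lifts. I will prove them separately, both using only that an exchange within a group is a permutation of the pairs $(b_t,c_t)$ among terms whose first factors $\bar a_t$ coincide in $A/E$.

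\emph{Existence of a canonical form.} Fix the term on which the search branches first and the group $J_0$ containing it. For every other group $J$, sort the multiset $\{512\,b_t+c_t : t\in J\}$ and reassign the pairs to the indices of $J$ in that order. This is one permutation of pairs within $J$, that is, a composition of exchanges of two pairs. Do this for every group other than $J_0$. The result is canonical by definition.

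\emph{The result is again a decomposition of $T_E$.} For each group $J$ the sum $\sum_{t\in J}\bar a_t\otimes b_t\otimes c_t$ equals $\bar a_J\otimes\sum_{t\in J}b_t\otimes c_t$, where $\bar a_J$ is the common coset. This expression is invariant under permuting the pairs within $J$. The factors stay nonzero, so we again have a decomposition of $T_E$ with nonzero factors.

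\emph{Lifting is preserved.} Suppose $a_t\in\bar a_t$ together with three terms with first factors in $E$ give a decomposition of $T$. Let $\sigma$ be the permutation applied, which preserves groups. Take the new representatives $a'_t=a_{\sigma(t)}$. These are legitimate, since $a_{\sigma(t)}\in\bar a_{\sigma(t)}=\bar a_t$. Then
\[
\sum_t a'_t\otimes b_{\sigma(t)}\otimes c_{\sigma(t)}=\sum_t a_t\otimes b_t\otimes c_t .
\]
So the same three extra terms complete the permuted decomposition. The converse follows by applying $\sigma^{-1}$.

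\emph{Consistency with Lemma~\ref{lem:liftplane}.} The refutations are made through the hypothesis of Lemma~\ref{lem:liftplane}, so it is worth noting that this hypothesis is also invariant. The space $W_\varphi$ depends only on the set $\{b_tc_t^{\mathsf T}\}$, which the permutation does not change.

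There is no real obstacle here. The only point requiring care is that canonicity excludes the group of the branching term. That group must be left untouched, because the search has already fixed that term's pair up to the $\GL$ symmetry, and re-sorting could move it. Since canonicity imposes no condition on that group, the construction above simply leaves it as it is.
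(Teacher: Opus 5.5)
Your proof is correct and follows the paper's argument: both rest on the fact that an exchange within a group only permutes the multiset of terms $\bar a_t\otimes b_t\otimes c_t$, and your reindexing $a'_t=a_{\sigma(t)}$ makes this explicit for the lifting property. The only difference is cosmetic: you sort each group directly, whereas the paper repeatedly exchanges inverted pairs and gets termination from the bounded potential $\Phi=\sum_t t\,(512\,b_t+c_t)$, which increases with each exchange.
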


\begin{proof}
Let $\Phi=\sum_t t\,(512\,b_t+c_t)$. Exchanging an inverted pair $u<v$ in a group raises $\Phi$ by a positive amount, and $\Phi$ is bounded, so repeated exchanges end at a canonical decomposition. Both properties are invariant under exchanges because they depend only on the multiset of terms.
\end{proof}

The search therefore refutes only canonical decompositions. On four leaves for which the search was run both with and without this reduction, the number of complete assignments drops by exactly the order of the exchange group: $8$, $48$, $48$ and $96$.

\begin{proof}[Proof of Theorem~\ref{thm:planes}]
We follow the proof of Theorem~\ref{thm:l479}. Suppose that three first factors lie in $E=E_{436}$ or $E=E_{422}$. They are distinct by Corollary~\ref{cor:distinct}, and at most one of them has rank one, so at least $13$ of the other $19$ have rank one. The other terms project to a decomposition of $T_E$ of length $19$, whose coset profile on the $63$ nonzero cosets of $E$ satisfies \eqref{eq:coset} and $\sum_{x\notin C_1}n(x)\le 6$, where $C_1$ is the set of cosets that contain a matrix of rank one. We enumerate these profiles up to the stabilizer of $E$. Table~\ref{tab:planes} gives the size of the two enumerations. Inside the enumeration, the completion search with target $T_E$ refutes every leaf except the external ones, whose certificates are too long, whose search exceeds a budget, or which admit decompositions of $T_E$. A separate run of the completion search refutes most external leaves. For the others, decompositions of $T_E$ exist, and Lemma~\ref{lem:liftplane} shows that none of them lifts, with Lemma~\ref{lem:eqf} reducing the decompositions to be tested.
\end{proof}

\begin{table}[ht]
\caption{The enumerations behind Theorem~\ref{thm:planes}. External leaves are refuted outside the enumeration; for those with decompositions of $T_E$, the last column counts the complete assignments that the completion search tests with Lemma~\ref{lem:liftplane}.}\label{tab:planes}
\centering
\small
\begin{tabular}{@{}lrrrrr@{}}
\toprule
plane & nodes & leaves & external leaves & with decompositions & assignments\\
\midrule
$E_{436}$ & $60{,}864{,}120{,}396$ & $22{,}519{,}627{,}971$ & $392{,}525$ & $3{,}382$ & $343{,}609{,}305$\\
$E_{422}$ & $12{,}944{,}107{,}307$ & $4{,}503{,}216{,}713$ & $96{,}469$ & $27$ & $7{,}254$\\
\bottomrule
\end{tabular}
\end{table}

In case (II), the search of Section~\ref{sec:search} uses Lemma~\ref{lem:W} and Theorem~\ref{thm:planes} in place of Theorem~\ref{thm:l479}: in Definition~\ref{def:adm}(d), each plane in the orbits of $E_{436}$ and $E_{422}$ contains at most two elements of $F$, and each subspace of Lemma~\ref{lem:W} at most eleven.

\subsection{The other checks}\label{subsec:otherchecks}
The point, the $23$ classes of Table~\ref{tab:raise} whose enumerations the first proof evaluates as compiled code, and the search at $r=22$ are checked by the kernel in the same way as the other enumerations. For $21$ of the $23$ classes, none of their $7{,}446$ external leaves admits a decomposition of the quotient. For classes $303$ and $321$ the kernel proves the bound $R(S)\ge 19$ directly, and these proofs include the refutation of $3{,}587$ and $3{,}541$ external leaves, respectively.

The certificates are grouped into packages of Lean modules. The certificates of a module are private declarations, and the module exports a statement about the part of the proof it covers, as in Section~\ref{sec:verify}. The kernel checks every declaration when the module is built. The final theorem depends on about $151{,}000$ packages with $14.2$ million modules, built in about $530{,}000$ vCPU-hours. We read off the axioms of the final theorem in two independent ways: with \texttt{\#print axioms}, and as the union of the axioms that the kernel records for every module when it is built, private declarations included, over the $20{,}470$ modules that the final theorem uses. Both give \texttt{propext}, \texttt{Classical.choice} and \texttt{Quot.sound}. A search of all modules for \texttt{sorryAx}, \texttt{Lean.ofReduceBool}, \texttt{Lean.trustCompiler} and the auxiliary declarations of \texttt{native\_decide} finds none. We also checked that every module and every import matches its recorded hash, that no two modules share a name, and that all modules were checked by the same build of the kernel. The private parts of the modules were deleted after these checks, so a complete independent replay must first regenerate them with the published generators and rebuild the modules.

\section{Remarks}\label{sec:remarks}

The constraints of Section~\ref{sec:types} hold for all decompositions of length $22$, independently of the search. The Gram system \eqref{eq:excess} applies to decompositions of any length, and the computation behind Proposition~\ref{prop:types} can be applied to other types. It would be interesting to know whether the method says anything about the structure of decompositions of length $23$, of which there are many inequivalent ones over $\F$ \cite{CBH11,HKS21}.

The rank of $\mm$ over $\mathbb{F}_3$, over $\mathbb{Q}$ and over $\mathbb{C}$ remains open, as does its border rank over $\F$. Our method relies on enumerating finite sets of first factors over $\F$, and it does not extend to fields of characteristic zero.

\section*{Acknowledgment}
The author thanks Manuel Kauers for sharing the collection of schemes \cite{Kau}. This research received no specific funding.

\section*{Use of AI tools}
Generative AI tools contributed to this work as follows. They did not contribute to choosing the question, a long-standing open problem that the author chose to work on. They helped with the conceptual approach and made central technical contributions: they were used to find and prove the results, including the plan of the proof in Sections~\ref{sec:raise} to~\ref{sec:tsolve}, and to write the programs and both Lean formalizations. They were also used for writing and exposition (drafting and revising the text), for checking for errors (reviewing the proofs, the programs and the text), and for other supporting tasks (searching the literature and running the computations). The author takes full responsibility for the content of the paper, including the parts produced with these tools.

\end{document}